\newcommand{\ydstres}{{}^{\ku{\Sn_3}}_{\ku{\Sn_3}}\mathcal{YD}}
\newcommand{\cO}{\mathcal{O}}
\newcommand{\cS}{\mathcal{S}}
\newcommand{\cM}{\mathcal{M}}
\newcommand{\FK}{\mathcal{FK}}
\newcommand{\Sn}{{\mathbb S}}
\newcommand{\xtop}{x_{top}}
\newcommand{\ytop}{y_{top}}
\newcommand{\oV}{\overline{V}}
\newcommand{\oR}{\overline{R}}
\renewcommand{\_}[1]{_{\left( #1 \right)}}
\newcommand{\ot}{{\otimes}}
\newcommand{\ku}{\Bbbk}
\newcommand\fInd{\mathsf{Ind}}
\newcommand\fL{\mathsf{L}}
\newcommand\fM{\mathsf{M}}
\newcommand\fN{\mathsf{N}}
\newcommand\fP{\mathsf{P}}
\newcommand\fQ{\mathsf{Q}}
\newcommand\fS{\mathsf{S}}
\newcommand\fU{\mathsf{U}}
\newcommand\fW{\mathsf{W}}
\newcommand\fn{\mathsf{n}}
\newcommand{\Z}{{\mathbb Z}}
\newcommand{\D}{\mathcal{D}}
\newcommand{\BV}{{\mathfrak B}}
\newcommand{\ydg}{{}^{\ku G}_{\ku G}\mathcal{YD}}
\newcommand{\ydgdual}{{}^{\ku^G}_{\ku^G}\mathcal{YD}}
\newcommand{\ch}{\operatorname{ch}}
\newcommand{\chgr}{\operatorname{ch}^{\bullet}}
\newcommand\Hom{\operatorname{Hom}}
\newcommand\id{\operatorname{id}}
\newcommand\soc{\operatorname{soc}_{\D^{\leq0}}}
\theoremstyle{plain}
\newtheorem{lema}{Lemma}[section]
\newtheorem{theorem}[lema]{Theorem}
\newtheorem{cor}[lema]{Corollary}
\newtheorem{question-app}{Question}
\theoremstyle{definition}
\newtheorem{definition}[lema]{Definition}
\theoremstyle{remark}
\newtheorem{obs}[lema]{Remark}
\newtheorem{rmk}[lema]{Remarks}
\begin{document}

\title{On projective modules over finite quantum groups}

\author[Cristian Vay]{Cristian Vay}

% \address{Instituto de Matem\'atica, Universidade Federal do Rio Grande do Sul, Av. Bento Goncalves 9500, Porto Alegre, RS, 91509-900, Brazil} \email{barbara.pogorelsky@ufrgs.br}
% 

\address{FaMAF-CIEM (CONICET), Universidad Nacional de C\'ordoba, Medina A\-llen\-de s/n, Ciudad Universitaria, 5000 C\' ordoba, Rep\'ublica Argentina.} \email{vay@famaf.unc.edu.ar}

\thanks{\noindent 2010 \emph{Mathematics Subject Classification.} 16T20, 17B37, 22E47, 17B10.
\newline The author was partially supported by CONICET, Secyt (UNC), FONCyT PICT 2016-3957, Programa de Cooperaci\'on MINCyT-FWO and MathAmSud project GR2HOPF}

\begin{abstract}
Let $\D$ be the Drinfeld double of the bosonization $\BV(V)\#\ku G$ of a finite-dimensional Nichols algebra $\BV(V)$ over a finite group $G$. It is known that the simple $\D$-modules are parametrized by the simple modules over $\D(G)$, the Drinfeld double of $G$. This parametrization can be obtained by considering the head $\fL(\lambda)$ of the Verma module $\fM(\lambda)$ for every simple $\D(G)$-module $\lambda$. In the present work, we show that the projective $\D$-modules are filtered by Verma modules and the BGG Reciprocity $[\fP(\mu):\fM(\lambda)]=[\fM(\lambda):\fL(\mu)]$ holds for the projective cover $\fP(\mu)$ of $\fL(\mu)$. We use graded characters to proof the BGG Reciprocity and obtain a graded version of it. As a by-product we show that a Verma module is simple if and only if it is projective. We also describe the tensor product between projective modules.

\end{abstract}

\maketitle

\section{Introduction}

The representation theory of the universal enveloping algebras of Lie algebras has plenty of powerful and beautiful methods and results which have served as inspiration to the study of modules over many other algebras. The first property that was extended to other contexts is that the simple modules are in bijective correspondence with the simple modules of the Cartan subalgebra. This was made by several authors for the Drinfeld double $\D$ of the bosonization $\BV(V)\#\ku G$ of a finite-dimensional Nichols algebra $\BV(V)$ over a finite group $G$; see for instance \cite{MR3367089,MR2732981,MR2407847,MR2840165} for $G$ abelian, and \cite{MR2279242,MR2504492,PV2} for general $G$. In these works $\D(G)$, the Drinfeld double of the underlying group, plays the role of the Cartan subalgebra. Namely, let $\Lambda$ be a complete set of non-isomorphic simple $\D(G)$-modules and $\fM(\lambda)$ denote the generalized Verma module of $\lambda\in\Lambda$. Then, the head $\fL(\lambda)$ of $\fM(\lambda)$ is simple and 
every simple $\D$-module can be obtained in this way, see {\it loc.~cit.} 

Here we continue this approach and investigate the projective modules over $\D$. We obtain the following results.

\begin{enumerate}
 \item[(I)] Every projective $\D$-module $\fP$ has a (graded) standard filtration.
\end{enumerate}
That is, there is a sequence of (graded) subdmodules $0=\fN_0\subset\fN_1\subset\cdots\subset\fN_n=\fP$ such that each $\fN_i/\fN_{i-1}$ is isomorphic to a Verma module. We denote by $[\fP:\fM(\lambda)]$ the numbers of subquotients isomorphic to the Verma module $\fM(\lambda)$, $\lambda\in\Lambda$.

\begin{enumerate}
 \item[(II)] The BGG Reciprocity holds in our setting. 
\end{enumerate}
More precisely, let $\fP(\mu)$ be the projective cover of $\fL(\mu)$, $\mu\in\Lambda$ (it exists because $\D$ is a finite-dimensional algebra over an algebraically closed field $\ku$ of characteristic zero). Then, for all $\lambda\in\Lambda$,
\begin{align*}
[\fP(\mu):\fM(\lambda)]=[\fM(\lambda):\fL(\mu)]
\end{align*}
where the last square brackets denote the numbers of occurrences of $\fL(\mu)$ as composition factor of $\fM(\lambda)$.

We also give analogous results to (I) and (II) for co-Verma modules. As consequence, we show that

\begin{enumerate}
\item[(III)] A Verma module is simple if and only if it is projective.
\item[(IV)]  The tensor product between projective modules is isomorphic to the induced module from a semisimple $\D(G)$-modules.
\end{enumerate}

\

The BGG Reciprocity has its origin in the work of Bernstein-Gelfand-Gelfand \cite{MR0407097} for modules in the category $\cO$. To achieve our goals we imitate the strategy in \cite[Section 3]{MR2428237} developed by Irving \cite{Irving-unpublusihed}, who has also defined axiomatically  a class of highest weight categories for which the BGG Reciprocity holds \cite{MR1080852}. However, our algebra $\D$ does not completely fit in these frameworks. Indeed, a highest weight category has finite global dimension but a finite-dimensional non-semisimple Hopf algebra is Frobenius and then it has infinite global dimension.

A more general definition of highest weight category was given by Kleshchev \cite{MR3335289}. Although this can have infinite global dimension, there must exist a partial order $\leq$ on $\Lambda$ such that $\mu\leq\lambda$ if $\fL(\mu)$ is a composition factor of the Verma module $\fM(\lambda)$, like in the definition of Cline-Parshall-Scott \cite{MR961165}. This property may fail in our case. For instance, if $\BV(V)$ is the Fomin-Kirillov algebra $\FK_3$ and $G$ is the symmetric group $\Sn_3$, we have shown that the composition factors of $\fM(\tau,0)$ and $\fM(e,\rho)$ are the same: $\fL(\tau,0)$, $\fL(\sigma,-)$ and $\fL(e,\rho)$ \cite[Theorems 9 and 10]{PV2}. Then, such an order on $\Lambda$ will imply that $(\tau,0)=(e,\rho)$, a contradiction. The order also ensures that the simple modules are identified by their characters but the characters of $\fL(\tau,0)$ and $\fL(e,\rho)$ are equal \cite[Corollaries 22 and 24]{PV2}.

\

After this work appeared, Bellamy and Thiel \cite{arXiv:1705.08024} introduced a highest weight theory for finite-dimensional graded algebras with triangular decomposition. They show that 
the category of graded modules over such an algebra is highest weight. They noted that $\D$ fits in their framework and hence some of our results can be deduced from \cite{arXiv:1705.08024}. 
As $\D$ is a finite-dimensional Hopf algebra, there are some peculiarities in this setting which are not present in \cite{arXiv:1705.08024} but are instrumental in our proofs. For instance, 
(a) the dual of a Verma module is isomorphic to a Verma module and (b) the tensor product between a Verma module and a co-Verma module is the induced module from a semisimple 
$\D(G)$-modules.

\

Let us summarize the main ideas in the present work. First, we observe that the Nichols algebra $\BV(V)$ is finite-dimensional and graded. Thus, we can consider graded characters that allow us to distinguish the simple modules (Theorem \ref{teo:chgr}) since they are characterized by their highest-weights. The finiteness assumption is useful to lead the standard filtrations of projective modules (Theorem \ref{teo:standard filtration BGG} and Lemma \ref{le: W ot M}) using (b). It also implies that the homogeneous component of maximal degree in $\BV(V)$ is one-dimensional. We use this fact to prove that the set of Verma modules is closed by taking duals, see \eqref{eq:dual of vermas}. Then, we find some identities among the graded characters of a Verma module, its dual and co-Verma module (Theorem \ref{teo:chgr de fM y fW}). As a consequence we give a graded version of the BGG Reciprocity similar to \cite[Theorem 7.6]{MR3335289}. More explicitly,
 there exist Laurent polynomials 
$p_{\fP(\mu),\fM(\lambda)},\,p_{\fM(\lambda),\fL(\mu)}\in\Z_{\geq0}[t,t^{-1}]$ such that the graded characters of $\fP(\mu)$ and $\fM(\lambda)$ satisfy
\begin{align*}
\chgr\fP(\mu)=\sum_{\lambda\in\Lambda}p_{\fP(\mu),\fM(\lambda)}\,\chgr\fM(\lambda)\quad\mbox{and}\quad
\chgr\fM(\lambda)=\sum_{\mu\in\Lambda}p_{\fM(\lambda),\fL(\mu)}\,\chgr\fL(\mu)
\end{align*}
in the ring $\Z\Lambda[t,t^{-1}]$. For all $\lambda,\mu\in\Lambda$, we show in Corollary \ref{cor:almost BGG} that
\begin{align*}
p_{\fP(\mu),\fM(\lambda)}=\overline{p_{\fM(\lambda),\fL(\mu)}}
\end{align*}
in the ring $\Z[t,t^{-1}]$, where the ring automorphism $\overline{(\quad)}:\Z[t,t^{-1}]\rightarrow\Z[t,t^{-1}]$ interchanges $t$ and $t^{-1}$. By evaluating these polynomials at $t=t^{-1}=1$ we obtain the BGG Reciprocity (Theorem \ref{teo:standard filtration BGG}).

\

Finally, we point out some by-products of our results. Let us assume that we know the graded characters of the simple modules. Then, we can deduce the graded structure of the indecomposable projective modules from the graded version of the BGG Reciprocity. Moreover, we can infer the tensor product between simple and projective modules from the multiplication of their graded characters in the ring $\Z\Lambda[t,t^{-1}]$. We carry out this plan for $\BV(V)=\FK_3$ and $G=\Sn_3$ jointly with Barbara Pogorelsky in \cite{PV-in-preparation}.

\

The article is organized as follows. We set our conventions and notations in Section \ref{sec:Preliminaries}, and state immediate properties of the $\D$-modules. We regard the graded characters in Section \ref{sec:characters}. The results (I)--(IV) are proved in Section \ref{sec:standard filtration}. We give some examples in Section \ref{sec:examples}.

\

\subsection*{Acknowledgments} 
I thank Nicol\'as Andruskiewitsch for the interesting and guiding discussions, and his comments which helped me improve this work. Also, I want to thank Ivan Angiono and Victor Ostrik for the useful conversations. Part of this work was carried out during a visit to the University of Antwerp. I am grateful with Fred Van Oystayen and Yinhuo Zhang for their warm hospitality and interesting discussions.

\section{Preliminaries}\label{sec:Preliminaries}
We assume that the reader is familiarized with Hopf algebras and Nichols algebra.

Through this work we adopt the conventions and notations from \cite[Section 3]{PV2} about finite quantum groups which we briefly recall below. 
Although the results in \cite{PV2} were stated for non-abelian groups, they also hold for abelian groups. These were proved for instance in \cite{MR3367089,MR2732981,MR2407847,MR2840165,MR2279242,MR2504492} by different methods.

\

We fix a finite group $G$ and a Yetter-Drinfeld module $V\in\ydg$ such that its Nichols algebra $\BV(V)$ is finite-dimensional. Let $\BV(\oV)$ be the Nichols algebra of the Yetter-Drinfeld module $\oV\in\ydgdual$ determined by the isomorphism
\begin{align*}
\BV(\oV)\#\ku^G\simeq(\BV(V)\#\ku G)^{*op}.
\end{align*}

We denote by $\D$ the Drinfeld double of the bosonization $\BV(V)\#\ku G$ and by $\D(G)$ the Drinfeld double of $\ku G$. Then $\BV(\oV)\#\ku^G$ and $\D(G)$ are Hopf subalgebras of $\D$. Moreover, $\D$ admits a triangular decomposition, {\it i.~e.} the multiplication gives a linear isomorphism
\begin{align*}
\BV(V)\ot\D(G)\ot\BV(\oV)\longrightarrow\D.
\end{align*}
The usual $\Z$-grading on the Nichols algebras induces a $\Z$-grading on $\D$ by setting
\begin{align*}
\D^n=\bigoplus_{n=j-i}\BV^i(V)\ot\D(G)\ot\BV^j(\oV).
\end{align*}

Via the adjoint action in $\D$, $\oV$ identities with the dual object of $V$  in the category of $\D(G)$-modules. Moreover, it holds that
\begin{align}\label{eq:nichols de oV}
\BV^n(\oV)\simeq\BV^n(V)^*
\end{align}
as $\D(G)$-modules for all $n\geq0$, see the remark below.

The Hopf subalgebra $\D^{\geq0}$ generated by $\BV(\oV)$ and $\D(G)$ satisfies
\begin{align*}
\D^{\geq0}\simeq\BV(\oV)\#\D(G). 
\end{align*}
Analogously, we will consider the Hopf subalgebra
\begin{align*}
\D^{\leq0}\simeq\BV(V)\#\D(G). 
\end{align*}

\begin{obs}
In \cite[Lemma 11 (iii) and (iv)]{PV2} we claim that $\BV(V)$ and $\BV(\oV)$ are the Nichols algebras of $V$ and $\oV$ in ${}_{\D(G)}\cM$, and $\BV(\oV)$ is isomorphic to $\BV(V)^{*bop}$ in ${}_{\D(G)}\cM$, the category of $\D(G)$-modules. However, we made a mistake in the proof and these properties do not hold. The correct version of them is the following.
\begin{enumerate}\renewcommand{\theenumi}{\roman{enumi}}\renewcommand{\labelenumi}{
(\theenumi)}
\item $\BV(\oV)$ is isomorphic to the Nichols algebra $\BV(\oV,c^{-1})$ corresponding to the dual object of $V$ in ${}_{\D(G)}\cM$ and the inverse braiding of the usual one in ${}_{\D(G)}\cM$.
\item There is an isomorphism $\BV(\oV)\simeq\BV(\oV,c)$ of algebras in ${}_{\D(G)}\cM$. Moreover, their defining ideals coincide in the tensor algebra $T(\oV)$.
\smallskip
\item $\BV^n(\oV)\simeq\BV^n(V)^*$ as $\D(G)$-modules for all $n\geq0$, that is \eqref{eq:nichols de oV}.
\end{enumerate} 

In fact, (i) is a straightforward computation, (ii) follows like \cite[Lemma 1.11]{MR2766176} and (iii) is a consequence of (ii) and \cite[Proposition 3.2.30]{MR1714540}.
\end{obs}

\

The comultiplication in $\D$ satisfies
\begin{align}\label{eq:comult in Dleq0}
\Delta(x)\in x\,\ot\,1+g_{x}\ot\,x+\sum_{i=1}^{n-1}(\D^{\leq0})^{i-n}\ot(\D^{\leq0})^{-i}, 
\end{align}
for all $x\in\BV^n(V)$ where $g_x\in G$, and
\begin{align}\label{eq:comult in Dgeq0}
\Delta(y)\in y\,\ot\,1+y\_{-1}\ot\,y\_{0}+\sum_{i=1}^{n-1}(\D^{\geq0})^{n-i}\ot(\D^{\geq0})^{i} 
\end{align}
for all $y\in\BV^n(\oV)$ where $y\_{-1}\ot\,y\_{0}$ is the coaction of $y$ since $\BV^n(\oV)\in\ydgdual$.

Let $n_{top}$ be the maximal degree of the Nichols algebra $\BV(V)$. The homogeneous component $\BV^{n_{top}}(V)$ is one-dimensional and coincides with space of integrals. We fix a non-zero monomial 
$\xtop=x_1\cdots x_{n_{top}}\in\BV^{n_{top}}(V)$ with $x_i\in V$. The antipode $\cS$ applied to this element is
\begin{align}\label{eq:S en xtop}
\cS(\xtop)=(-1)^{n_{top}}(g_{\xtop}^{-1}x_{n_{top}})\cdots(g_1^{-1}x_1)=c\,\xtop g_{\xtop}^{-1}
\end{align}
for some non-zero scalar $c$.

The maximal degree of $\BV(\oV)$ also is $n_{top}$. We fix a basis element $\ytop$ of $\BV^{n_{top}}(\oV)$.

\

Let $\beta_1$ and $\beta_2$ be non-zero right integrals of $\BV(\oV)\#\ku^G$ and $\BV(V)\#\ku G$, respectively. According to our convention, $\D^*\simeq(\BV(\oV)\#\ku^G)\ot(\BV(V)\#\ku G)$ as algebras. Then $\beta=\beta_1\ot\beta_2$ is a non-zero right integral of $\D^*$.  

The Drinfeld double of a finite-dimensional Hopf algebra is a symmetric algebra, see \cite[p. 488, (3)]{MR1435369} and \cite{MR0347838,MR1220770}. That is, $\D$ has a non-degenerate bilinear form $(-,-)$ which is associative and symmetric. It is known that $(a,b)=\beta(ab)$ for all $a,b\in\D$. Hence $(\D^n,\D^m)=0$, if $n+m\neq0$.

An important property of a symmetric algebra is that the socle and the head of every indecomposable projective are isomorphic, see for instance \cite[(9.12)]{MR1038525}.

\subsubsection*{Conventions} In this work we consider finite-dimensional left modules over $\D$. When there is no place to confusion, we will refer to them just as modules. We will use $\fN$, $\fN'$, $\fN_1$, ... to denote them. Projective $\D$-modules are injective (and vice versa) because $\D$ is Frobenius.
We will consider the $\D$-modules as $\D^{\leq0}$-modules or $\D(G)$-modules by restricting the action. We will use $N$ to denote the $\D(G)$-modules. 

Let $\fN$ be a module and $N$ a $\D(G)$-submodule. We emphasize that the action $\BV(V)\ot N\longrightarrow\fN$ is a morphism of $\D(G)$-modules, cf \cite[(31)]{PV2} and in particular so is $\ku\xtop\ot N\longrightarrow\fN$.

\subsection{Weights} We fix a representative set $\Lambda$ of isomorphism classes of simple $\D(G)$-modules. We call {\it weights} the elements of this set. The counit $\varepsilon$ of $\D(G)$ gives the trivial representation. Then, by abuse of notation, we assume that $\varepsilon\in\Lambda$. The dual weight of $\lambda$ will be denoted $\lambda^*$.

It is well-know that the weights are parametrized by conjugacy classes $\cO_g$ and irreducible representations $\varrho$ of the centralizer of $g\in G$, see {\it e.~g.} \cite{MR1714540}. For instance, for $G$ abelian, the weights are one-dimensional and are in bijective correspondence with $G\times\widehat{G}$ where $\widehat{G}$ is the group of characters of $G$. For $G$ non-abelian, the  weight $\lambda=M(g,\varrho)$ attached to the pair $(g,\varrho)$ has dimension $\#\cO_g\cdot\dim\varrho$.

%We shall write $M(g,\varrho)$ when we want to emphasize a realization.

Let $N$ be a $\D(G)$-module and $\lambda\in\Lambda$. We define 
\begin{align}\label{eq:weight multiplicity}
[N:\lambda]=\dim\Hom_{\D(G)}(\lambda,N) 
\end{align}
We say that $\lambda$ is a weight of $N$ if $[N:\lambda]\neq0$. Since $\D(G)$ is a semisimple algebra, $N=\oplus_{\lambda\in\Lambda}[N:\lambda]\cdot\lambda$.  In particular, any module over $\D$, $\D^{\geq0}$ and $\D^{\leq0}$ decomposes into the direct sum of their weights.

Let $K$ be the Grothendieck ring of the category of $\D(G)$-modules. This was described in \cite{MR1367852}, and anticipated in \cite{MR933415}. We think of $K$ as the free abelian group generated by $\Lambda$. Then the {\it character} of $N$ is the element in $K$
\begin{align*}
\ch N=\sum_{\lambda\in\Lambda}[N:\lambda]\cdot\lambda.
\end{align*}
The product in $K$ between $\lambda,\lambda'\in\Lambda$ is given by the tensor product
\begin{align*}
\lambda\cdot\lambda'=\ch(\lambda\ot\lambda').
\end{align*}
Then $\varepsilon$ is the unit of $K$. For $G$ non-abelian $\lambda\ot\lambda'$ could not belong to $\Lambda$ but it decomposes into the direct sum of weights.
Recall that $K$ is a commutative ring because ${}_{\D(G)}\cM$ is a braided category.

%We assume a $\Z_{\leq0}$-degree because we will consider $\D^{\leq0}$-module in \S\ref{subsec:characters}.

\subsection{Highest and lowest weights}\label{subsec:Highest and Lowest weight} 
A weight is a simple $\D^{\geq0}$-module if we let $\BV(\oV)$ act via the counit. In this case, we call it a {\it highest-weight}. Up to isomorphisms, all the simple $\D^{\geq0}$-modules are highest-weight. A module generated by a highest-weight is called {\it highest-weight module}. Notice that the tensor product of highest-weights decomposes into the direct sum of highest-weights since $\Delta(\oV)\subset\oV\ot1+\ku^{G}\ot\oV$.

Similarly, we consider weights as $\D^{\leq0}$-modules and call these {\it lowest-weights}. In particular, the space of lowest-weights of a $\D^{\leq0}$-module $\fN$ coincides with its socle
\begin{align}\label{eq:soc Dleq0}
\soc\fN=\{\fn\in\fN\mid V\cdot\fn=0\}\simeq\oplus_{\lambda\in\Lambda}\dim\Hom_{\D^{\leq0}}(\lambda,\fN)\cdot \lambda.
\end{align}

Since the maximal degree component of a Nichols algebra is one-dimensional,
\begin{align*}
\lambda_{V}=\ch\BV^{n_{top}}(V)\quad\mbox{and}\quad\lambda_{\oV}=\ch\BV^{n_{top}}(\oV)
\end{align*}
are one-dimensional weights. These are the unique lowest-weight and highest-weight  of the regular left modules ${}_{\D^{\leq0}}\D^{\leq0}$ and ${}_{\D^{\geq0}}\D^{\geq0}$.
Tensoring by $\lambda_V$ induces a bijection between weights because \eqref{eq:nichols de oV} implies
\begin{align*}
\lambda_{\oV}=\lambda_V^*\quad\mbox{and hence}\quad\lambda_V\cdot\lambda_{\oV}=\varepsilon.
\end{align*}

\subsection{Verma and simple modules} We fix a weight $\lambda\in\Lambda$. The corresponding Verma module \cite[Definition 12]{PV2} is the induced module
\begin{align}\label{eq:verma}
\fM(\lambda)=\D\ot_{\D^{\geq0}}\lambda. 
\end{align}
It is a highest-weight module and any highest-weight module of weight $\lambda$ is a quotient of it. By the triangular decomposition, we see that \begin{align*}
\fM(\lambda)\simeq\BV(V)\ot\lambda                                                                                                                                                   
\end{align*}
as $\D(G)$-modules and it is a free $\BV(V)$-module and inherits the $\Z$-grading from $\D$.

\subsubsection{Simple modules}\label{notation:L y S}
The Verma modules have simple head and simple socle. This is a well-known fact, see for instance \cite{MR3367089,MR2732981,MR2407847,MR2840165} for $G$ abelian, and \cite{MR2279242,MR2504492,PV2} for general $G$. Therefore the simple $\D$-modules are in bijective correspondence with $\Lambda$. We adopt the next conventions.
 
\begin{itemize}
 \item $\fL(\lambda)$ is the head of $\fM(\lambda)$. It is the unique simple highest-weight module of weight $\lambda$. It inherits the $\Z$-grading from $\fM(\lambda)$ \cite{PV2}.
 \smallskip
 \item $\fS(\lambda)$ is the socle of $\fM(\lambda)$. It is the unique simple lowest-weight module of weight $\lambda_V\cdot\lambda$. The lowest-weight of $\fS(\lambda)$ is realized by $\BV^{n_{top}}(V)\ot\lambda$.
 \smallskip
 \item $\overline{\lambda}$ denotes the lowest-weight of $\fL(\lambda)$. Then the assignment $\lambda\mapsto\overline{\lambda}$ is a bijection in $\Lambda$.
 \smallskip
 \item We think of $\fM$, $\fL$ and $\fS$ (and also $\fP$, $\fInd$ and $\fW$, see below) as maps from $K$ to ${}_{\D}\cM$ which transform sums of weights into  direct sums of modules. We will use this fact to abbreviate the notation. For instance, we will write $\fM(\lambda\cdot\mu)$ instead of $\oplus_in_i\cdot\fM(\lambda_i)$ if $\lambda\cdot\mu=\sum_in_i\lambda_i$.
\end{itemize}

Then
\begin{align}\label{eq:omga sobre L}
\fL(\lambda)\simeq\fS(\lambda_{\oV}\cdot\overline{\lambda})\quad\mbox{and}\quad\fL(\lambda)^*\simeq\fL\left(\overline{\lambda}^{*}\right),
\end{align}
the first isomorphism follows by the characterization of the simple modules and the second one follows by \cite[Theorem 5]{PV2}.

\subsubsection{Verma modules as $\D^{\leq0}$-modules}

Clearly, we have that
\begin{align}\label{eq:verma as Dleq0}
\fM(\lambda)\simeq\D^{\leq0}\ot_{\D(G)}\lambda.
\end{align}
and, since the top degree of a Nichols algebra is one-dimensional,
\begin{align}\label{eq:soc of Verma}
\soc\fM(\lambda)=\BV^{n_{top}}(V)\ot\lambda\simeq\lambda_V\cdot\lambda.
\end{align}

\begin{lema}\label{le:verma is proj cover}
As $\D^{\leq0}$-module, $\fM(\lambda)$ is the projective cover of the lowest-weight $\lambda$ and the injective hull of the lowest-weight $\lambda_V\cdot\lambda$.
\end{lema}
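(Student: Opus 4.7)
My plan is to leverage the triangular decomposition $\D^{\leq0}\simeq\BV(V)\#\D(G)$ together with the fact that $\D(G)$ is semisimple and $\D^{\leq0}$ is Frobenius (being a finite-dimensional Hopf algebra, namely the bosonization of a Hopf algebra in $\ydg$ by a Hopf algebra).

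First I would establish projectivity. By \eqref{eq:verma as Dleq0} we have $\fM(\lambda)\simeq\D^{\leq0}\otimes_{\D(G)}\lambda$, and the triangular decomposition makes $\D^{\leq0}$ free as a right $\D(G)$-module (with basis any homogeneous basis of $\BV(V)$). Since $\D(G)$ is semisimple, $\lambda$ is projective over $\D(G)$, and induction along the inclusion $\D(G)\hookrightarrow\D^{\leq0}$ preserves projectivity. Hence $\fM(\lambda)$ is projective as a $\D^{\leq0}$-module.

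Next I would identify the head. Using the $\D(G)$-module isomorphism $\fM(\lambda)\simeq\BV(V)\otimes\lambda$ and the fact that $V$ acts on the tensor factor $\BV(V)$ by multiplication, we obtain
\begin{align*}
\fM(\lambda)/V\cdot\fM(\lambda)\simeq\BV^0(V)\otimes\lambda\simeq\lambda
\end{align*}
as $\D^{\leq0}$-modules, since this quotient kills all positive-degree components. Because $\lambda$ is a simple $\D^{\leq0}$-module (lowest weights are simple by definition, as $V$ acts trivially and $\lambda\in\Lambda$ is simple over $\D(G)$), the projective module $\fM(\lambda)$ has simple head $\lambda$; so $\fM(\lambda)$ is indecomposable and is the projective cover of $\lambda$.

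Finally I would upgrade this to the injective statement. The algebra $\D^{\leq0}\simeq\BV(V)\#\D(G)$ is a finite-dimensional Hopf algebra and therefore Frobenius, so every projective $\D^{\leq0}$-module is injective. Thus $\fM(\lambda)$ is an indecomposable injective $\D^{\leq0}$-module, and by \eqref{eq:soc of Verma} its socle is $\BV^{n_{top}}(V)\otimes\lambda\simeq\lambda_V\cdot\lambda$, which is simple. Consequently $\fM(\lambda)$ is the injective hull of $\lambda_V\cdot\lambda$. No step presents a real obstacle; the only subtlety worth stating carefully is that, although $\BV(V)$ is a Hopf algebra only in a braided category, the bosonization $\BV(V)\#\D(G)$ is an honest finite-dimensional Hopf algebra, which is what delivers the Frobenius property.
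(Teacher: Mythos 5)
Your proof is correct and follows essentially the same approach as the paper: projectivity comes from inducing a module from the semisimple subalgebra $\D(G)$, injectivity from $\D^{\leq0}$ being a finite-dimensional (hence Frobenius) Hopf algebra, and indecomposability from having a one-dimensional top (resp.\ bottom) graded piece. The paper's version is terser, deriving indecomposability directly from the simple socle via \eqref{eq:soc of Verma} rather than computing the head, but the underlying reasoning is the same.
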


\begin{proof}
By \eqref{eq:verma as Dleq0}, $\fM(\lambda)$ is an induced module from a semisimple algebra and hence it is projective. Also, it is injective because $\D^{\leq0}$ is a finite-dimensional Hopf algebra. By \eqref{eq:soc of Verma}, $\fM(\lambda)$ is indecomposable and the lemma follows.
\end{proof}

The next remark is very useful and follows directly from \eqref{eq:verma as Dleq0}.

\begin{obs}\label{obs:morf inj}
A morphism $f:\fM(\lambda)\rightarrow\fN$ of $\D$-modules is injective if and only if $\xtop\cdot f(\lambda)\neq0$.
\end{obs}

\subsubsection{The dual of a Verma module} For all $\lambda\in\Lambda$ it holds that
\begin{align}\label{eq:dual of vermas}
\fM(\lambda)^*\simeq\fM\left((\lambda_V\cdot\lambda)^*\right).
\end{align}
In fact, we can see that $(\BV^{n_{top}}(V)\ot\lambda)^*\simeq(\lambda_V\ot\lambda)^*$ is a highest-weight of $\fM(\lambda)^*$ by using the $\Z$-degree.
This induces a morphism $f:\fM((\lambda_V\cdot\lambda)^*)\longrightarrow\fM(\lambda)^*$. We apply $f$ to $\soc(\fM(\lambda_V\cdot\lambda)^*)$ and evaluate in the space  $1\ot\lambda\subset\fM(\lambda)$:
\begin{align*}
\langle \xtop\cdot f((\lambda_V\cdot\lambda)^*),1\ot\lambda\rangle&=
\langle(\BV^{n_{top}}(V)\ot\lambda)^*,\cS(\xtop)\ot\lambda\rangle\\
&=\langle(\BV^{n_{top}}(V)\ot\lambda)^*,\xtop\ot g_{\xtop}^{-1}\cdot\lambda\rangle\neq0,
\end{align*}
here we use \eqref{eq:S en xtop} and the fact that the action of $g_{\xtop}^{-1}$ is bijective on $\lambda$. Therefore \eqref{eq:dual of vermas} follows from Remark \ref{obs:morf inj} using the finiteness assumption.

For all $0\leq j\leq n_{top}$, we immediate deduce from \eqref{eq:dual of vermas} that
\begin{align}\label{eq:dual of vermas por grado}
(\BV^{n_{top}-j}(V)\ot\lambda)^*\simeq\BV^{j}(V)\ot(\lambda_V\cdot\lambda)^*
\end{align}
as $\D(G)$-modules. Using \eqref{eq:nichols de oV}, we can rewrite the above isomorphism as
\begin{align}\label{eq:dual of covermas por grado}
\BV^{n_{top}-j}(\oV)\ot\lambda^*\simeq\left(\BV^{j}(\oV)\ot(\lambda_V\cdot\lambda)\right)^*
\end{align}

\subsubsection{The co-Verma modules}%\label{subsub:coverma}
Let us consider $\lambda$ as a lowest-weight. We set 
\begin{align}\label{def:coVerma}
\fW(\lambda)=\D\ot_{\D^{\leq0}}\lambda\simeq\BV(\oV)\ot\lambda,
\end{align}
where the isomorphism is of $\D(G)$-modules and it inherits the $\Z$-grading from $\D$.

The co-Verma modules have analogous properties to the Verma modules. In fact, we can see that $\fW(\lambda)$ have simple head and simple socle as in the proof of \cite[Theorem 3 and 4]{PV2}. These are isomorphic to the socle and the head of $\fM(\lambda_{\oV}\cdot\lambda)$, respectively. Also, they are indecomposable projective as $\D^{\geq0}$-modules.

\subsection{Projective modules}\label{subsec:projective modules}
The next lemma is inspired by \cite[Lemma 2.4]{MR1435369}.

\begin{lema}\label{lema:projective decompose into verma}
Let $\fP$ be projective. Then 
\begin{align*}
\soc\fP\simeq\oplus_{\lambda\in\Lambda}\dim\Hom_{\D^{\leq0}}(\lambda,\fP)\cdot \lambda.
\end{align*}
if and only if, as $\D^{\leq0}$-modules,
\begin{align*}
\fP\simeq\oplus_{\lambda\in\Lambda}\dim\Hom_{\D^{\leq0}}(\lambda,\fP)\cdot \fM\bigl(\lambda_{\oV}\cdot\lambda\bigr).
\end{align*}
\end{lema}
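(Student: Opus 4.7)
My plan is to show that both sides of the equivalence in fact hold simultaneously as consequences of $\fP$ being projective, by restricting the module along the Hopf subalgebra inclusion $\D^{\leq 0}\subset\D$ and invoking Lemma \ref{le:verma is proj cover}.

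First I will observe that the triangular decomposition yields an isomorphism $\D\simeq \D^{\leq 0}\ot \BV(\oV)$ of left $\D^{\leq 0}$-modules, so $\D$ is free, hence projective, over $\D^{\leq 0}$. Consequently restriction of scalars sends projective $\D$-modules to projective $\D^{\leq 0}$-modules, and since $\D^{\leq 0}$ is a finite-dimensional Hopf algebra---hence Frobenius---projective and injective $\D^{\leq 0}$-modules coincide. Thus $\fP$, viewed as a $\D^{\leq 0}$-module, is injective.

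Next I will decompose $\fP$ using the classical structure theory for injective modules over the finite-dimensional algebra $\D^{\leq 0}$: it splits into a direct sum of injective hulls of simple $\D^{\leq 0}$-modules. The simple $\D^{\leq 0}$-modules are precisely the lowest-weights $\lambda\in\Lambda$ (with $V$ acting trivially), and by Lemma \ref{le:verma is proj cover} together with the identity $\lambda_V\cdot\lambda_{\oV}=\varepsilon$, the injective hull of $\lambda$ is exactly $\fM(\lambda_{\oV}\cdot\lambda)$. Writing $n_\lambda\geq 0$ for its multiplicity, I obtain
\begin{align*}
\fP\simeq\bigoplus_{\lambda\in\Lambda} n_\lambda\cdot\fM(\lambda_{\oV}\cdot\lambda)
\end{align*}
as $\D^{\leq 0}$-modules. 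Taking socles and applying \eqref{eq:soc of Verma} together with $\lambda_V\cdot\lambda_{\oV}\cdot\lambda=\lambda$ gives $\soc\fP\simeq\oplus_{\lambda}n_\lambda\cdot\lambda$. A standard Schur-type argument---any morphism of $\D^{\leq 0}$-modules from the simple $\lambda$ into $\fP$ factors through $\soc\fP$---then identifies $n_\lambda=\dim\Hom_{\D^{\leq 0}}(\lambda,\fP)$. Both decompositions thus hold with matching multiplicities, establishing the equivalence.

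The only mildly delicate point I foresee is the assertion that restriction to $\D^{\leq 0}$ preserves projectivity; this rests on the freeness of $\D$ over $\D^{\leq 0}$ coming from the triangular decomposition, combined with the standard fact that a module induced up or restricted along a free extension remains projective. Everything else is an application of Lemma \ref{le:verma is proj cover} and the Krull--Schmidt uniqueness of the injective decomposition.
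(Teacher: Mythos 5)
Your proof is correct and follows essentially the same path as the paper's: both deduce from the triangular decomposition that $\fP$ is projective over $\D^{\leq 0}$, decompose it into indecomposable summands identified as Verma modules via Lemma~\ref{le:verma is proj cover}, and match multiplicities through the socle using \eqref{eq:soc of Verma}. The only cosmetic difference is that you pass through injectivity (projective $\Rightarrow$ injective over the Frobenius algebra $\D^{\leq 0}$, then decompose into injective hulls of simples), whereas the paper works directly with the decomposition into indecomposable projectives; Lemma~\ref{le:verma is proj cover} provides both the projective-cover and injective-hull descriptions, so either route lands in the same place.
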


\begin{proof}
Since $\D$ is free over $\D^{\leq0}$, $\fP$ is $\D^{\leq0}$-projective. Hence $\fP$ decomposes into the direct sum of indecomposable projective $\D^{\leq0}$-modules, which are precisely the Verma modules by Lemma \ref{le:verma is proj cover}. Then, the lemma follows using \eqref{eq:soc of Verma}. 
\end{proof}

\begin{definition}\label{def:Plambda}
Given a weight $\lambda$, $\fP(\lambda)$ denotes the projective cover of the simple highest-weight module $\fL(\lambda)$.  
\end{definition}

\begin{lema}\label{le:about P}
Let $\lambda$ be a weight. Then
\begin{enumerate}[label=(\roman*)]
\item\label{item:P es injective hull} $\fP(\lambda)$ is the injective hull of $\fL(\lambda)$.
\smallskip
\item\label{item:P pc of verma} $\fP(\lambda)$ is the projective cover of $\fM(\lambda)$.
\smallskip
\item\label{item:P ih of verma} $\fP(\lambda)$ is the injective hull of $\fM\left(\lambda_{\oV}\cdot\overline{\lambda}\right)$.
\smallskip
\item\label{item:P pc of coverma} $\fP(\lambda)$ is the projective cover of $\fW(\overline{\lambda})$.
\smallskip
\item\label{item:P ih of coverma} $\fP(\lambda)$ is the injective hull of $\fW\left(\lambda_V\cdot\lambda\right)$.
\smallskip
\item\label{item:P dual} $\fP(\lambda)^*\simeq \fP\left(\overline{\lambda}^*\right)$.
\end{enumerate}
\end{lema}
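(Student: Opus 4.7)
The proof of Lemma \ref{le:about P} should be essentially formal, relying on three general principles: (a) a projective cover of a module coincides with the projective cover of its head (and dually for injective hulls and socles); (b) $\D$ is a symmetric algebra, so the socle and head of every indecomposable projective are isomorphic (cited in the Preliminaries); and (c) for a finite-dimensional Hopf algebra, dualization swaps head and socle, and projective modules are injective. My plan is to dispose of the six items in this order: \ref{item:P es injective hull}, \ref{item:P pc of verma}, \ref{item:P pc of coverma}, \ref{item:P ih of verma}, \ref{item:P ih of coverma}, \ref{item:P dual}, reusing the output of each step.

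For \ref{item:P es injective hull}, since $\fP(\lambda)$ is an indecomposable projective and $\D$ is symmetric, its socle is isomorphic to its head $\fL(\lambda)$; because $\D$ is Frobenius, $\fP(\lambda)$ is also injective, hence it is an injective hull of $\fL(\lambda)$. For \ref{item:P pc of verma}, observe that the canonical surjection $\fM(\lambda)\twoheadrightarrow\fL(\lambda)$ is an essential epimorphism (its kernel is the radical of $\fM(\lambda)$); composing it with the projective cover $\fP(\lambda)\twoheadrightarrow\fL(\lambda)$ shows that $\fP(\lambda)$ is also a projective cover of $\fM(\lambda)$.

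For \ref{item:P pc of coverma}, I use the recollection (stated above Lemma \ref{le:about P} in the text introducing co-Verma modules) that the head of $\fW(\mu)$ is isomorphic to the socle of $\fM(\lambda_{\oV}\cdot\mu)$; taking $\mu=\overline{\lambda}$ together with the first isomorphism in \eqref{eq:omga sobre L}, i.e.\ $\fL(\lambda)\simeq\fS(\lambda_{\oV}\cdot\overline{\lambda})$, shows that the head of $\fW(\overline{\lambda})$ is $\fL(\lambda)$, so its projective cover is $\fP(\lambda)$. Part \ref{item:P ih of verma} is dual: the socle of $\fM(\lambda_{\oV}\cdot\overline{\lambda})$ is precisely $\fS(\lambda_{\oV}\cdot\overline{\lambda})\simeq\fL(\lambda)$ by \eqref{eq:omga sobre L}, so an injective hull of this Verma module is an injective hull of $\fL(\lambda)$, which is $\fP(\lambda)$ by \ref{item:P es injective hull}. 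For \ref{item:P ih of coverma}, the same recollection about co-Vermas gives that the socle of $\fW(\lambda_V\cdot\lambda)$ equals the head of $\fM(\lambda_{\oV}\cdot\lambda_V\cdot\lambda)=\fM(\lambda)$, which is $\fL(\lambda)$; thus $\fP(\lambda)$ is its injective hull.

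Finally, for \ref{item:P dual}, since $\D$ is a finite-dimensional Hopf algebra, the dual $\fP(\lambda)^*$ is again projective, and dualization interchanges head and socle, so the head of $\fP(\lambda)^*$ is $(\soc\fP(\lambda))^*\simeq\fL(\lambda)^*$, which equals $\fL(\overline{\lambda}^*)$ by the second isomorphism in \eqref{eq:omga sobre L}. Therefore $\fP(\lambda)^*\simeq\fP(\overline{\lambda}^*)$. No part of the argument looks hard; the only delicate point is to be careful about the two different parametrizations (by highest and by lowest weights) that are in play simultaneously, and to invoke the right identity from \eqref{eq:omga sobre L} at each step.
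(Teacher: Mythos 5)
Your proof is correct and follows essentially the same route as the paper: item \ref{item:P es injective hull} comes from the symmetric-algebra property of $\D$, items \ref{item:P pc of verma}--\ref{item:P ih of coverma} all reduce to the observation that (co-)Verma modules have simple head and socle (the paper cites \cite[(6.25)]{MR1038525} for this reduction, which you spell out via superfluous kernels), and \ref{item:P dual} follows from \ref{item:P es injective hull} together with \eqref{eq:omga sobre L}. The only cosmetic difference is that you go through heads/socles explicitly where the paper simply invokes the standard reference.
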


\begin{proof}
\ref{item:P es injective hull} holds because $\D$ is a symmetric algebra.

\ref{item:P pc of verma}\ref{item:P ih of verma} As the Verma modules have simple socle and simple head, we can deduce that $\fP(\lambda)$ is the projective cover of $\fM(\lambda)$ and the injective hull of $\fM(\lambda_{\oV}\cdot\overline{\lambda})$, see for instance \cite[(6.25)]{MR1038525}. 

We show \ref{item:P pc of coverma} and \ref{item:P ih of coverma} in a similar way, cf. Definition \ref{def:coVerma}.

\ref{item:P dual} is consequence of \ref{item:P es injective hull} and \eqref{eq:omga sobre L} because $\fP(\lambda)^*$ is the injective hull of $\fL(\lambda)^*$.
\end{proof}

By the above lemma we have the next commutative diagrams of injective and surjective morphisms:
\begin{align}\label{eq:LMPML}
\begin{tikzpicture}
[anchor=base, baseline]
\node (M) at (0,.75) {$\fM(\lambda_{\oV}\cdot\overline{\lambda})$};
\node (P) at (0,-.75) {$\fP(\lambda)$};
\node (W) at (-2.5,-.75) {$\fW\bigl(\lambda_V\cdot\lambda\bigr)$};
\node (L) at (-2.5,.75) {$\fL(\lambda)$};
\draw (M) edge[right hook->] (P); %node[right] {$p$}
\draw (W) edge[right hook->] (P); %node[above] {$f$}
\draw (L) edge[right hook->] (M);
\draw (L) edge[right hook->] (W);
%\node (and) at (1.5,0) {and};
\node (M1) at (5,.75) {$\fM(\lambda)$};
\node (P1) at (3,.75) {$\fP(\lambda)$};
\node (W1) at (3,-.75) {$\fW(\overline{\lambda})$};
\node (L1) at (5,-.75) {$\fL(\lambda)$};
\draw (W1) edge[->>] (L1); %node[right] {$p$}
\draw (M1) edge[->>] (L1); %node[above] {$f$}
\draw (P1) edge[->>] (M1);
\draw (P1) edge[->>] (W1);
\end{tikzpicture}
\end{align}
We deduce from the morphisms in the first rows that
\begin{align}\label{eq:some lw of P}
\overline{\lambda}\quad\mbox{and}\quad\lambda_V\cdot\lambda\quad\mbox{are weights of}\quad \soc\fP(\lambda).
\end{align}

The numbers of occurrences of $\fL(\lambda)$ as composition factor of $\fN$ will be denoted by $[\fN:\fL(\lambda)]$. It is known that 
$[\fN:\fL(\lambda)]=\dim\Hom_{\D}(\fP(\lambda),\fN)=\dim\Hom_{\D}(\fN,\fP(\lambda))$, the last equality is thanks to Lemma \ref{le:about P} \ref{item:P es injective hull}.

% \begin{align}\label{eq:LMPML}
% \fL(g,\varrho)\hookrightarrow\fM(g',\varrho')\hookrightarrow\fP(g,\varrho)\quad\mbox{and}\quad
% \fP(g,\varrho)\twoheadrightarrow\fM(g,\varrho)\twoheadrightarrow\fL(g,\varrho).
% \end{align}

\subsection{Induced modules}

\begin{definition}\label{def:Ind lambda}
Given a weight $\lambda$, $\fInd(\lambda)$ denotes the induced module $\D\ot_{\D(G)}\lambda$.
\end{definition}

We can consider $\fInd(\lambda)$ as a submodule of the left regular module ${}_{\D}\D$ since $\D(G)$ is a subalgebra. Moreover, it is a direct summand and hence projective. Then
\begin{align}\label{eq:Ind as sum of Ps}
\fInd(\lambda)=\oplus_{\mu\in\Lambda}[\fL(\mu):\lambda]\cdot\fP(\mu). 
\end{align}
In fact, if $\lambda$ is a weight of $\fL(\mu)$, then there is a non trivial epimorphism $f:\fInd(\lambda)\rightarrow\fL(\mu)$ just by the definition of induced module. Thus $f$ factors through  $\fP(\mu)$ and we deduce \eqref{eq:Ind as sum of Ps} because of the Frobenius reciprocity $\Hom_{\D}(\fInd(\lambda),\fL(\mu))\simeq\Hom_{\D(G)}(\lambda,\fL(\mu))$.

The triangular decomposition of $\D$ gives linear isomorphisms
\begin{align}\label{eq:triangular for Ind}
\fInd(\lambda)\simeq\BV(\oV)\ot\BV(V)\ot\lambda\simeq\BV(V)\ot\BV(\oV)\ot\lambda
\end{align}
and $\fInd(\lambda)$ inherits the $\Z$-grading of $\D$. 
Therefore we see that
\begin{align}\label{eq:lw del Ind}
\soc\fInd(\lambda)\simeq\lambda_V\ot\BV(\oV)\ot\lambda.
\end{align}
Hence, by Lemma \ref{lema:projective decompose into verma}, we have the next isomorphism of $\D^{\leq0}$-modules:
\begin{align}\label{eq:Ind as sum of vermas}
\fInd(\lambda)\simeq\fM\bigl(\ch\BV(\oV)\cdot\lambda\bigr).
\end{align}

\begin{lema}\label{le: W ot M}
Let $\lambda,\mu$ be weights and
$$
f:\fInd\bigl(\lambda\cdot\mu\bigr)\longrightarrow\fW(\lambda)\ot\fM(\mu)
$$
the morphism induced by $\lambda\ot\mu\overset{\sim}{\longrightarrow}(1\ot\lambda)\ot(1\ot\mu)$. Then $f$ is an isomorphism.
\end{lema}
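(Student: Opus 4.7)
The approach is to reduce the statement to surjectivity via a dimension count and then to establish surjectivity by explicit calculation with the coproducts \eqref{eq:comult in Dleq0} and \eqref{eq:comult in Dgeq0}. By the triangular decomposition \eqref{eq:triangular for Ind} one has $\dim\fInd(\lambda\cdot\mu)=\dim\BV(V)\cdot\dim\BV(\oV)\cdot\dim\lambda\cdot\dim\mu$, and the same dimension is obtained for $\fW(\lambda)\otimes\fM(\mu)$ from \eqref{def:coVerma} together with $\fM(\mu)\simeq\BV(V)\otimes\mu$. Hence it suffices to prove surjectivity. Write $\fN=\im f$, so $\fN$ is the $\D$-submodule of $\fW(\lambda)\otimes\fM(\mu)$ generated by the elements $(1\otimes l)\otimes(1\otimes m)$ with $l\in\lambda$, $m\in\mu$.

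First I would show $(1\otimes\lambda)\otimes\fM(\mu)\subset\fN$ by acting with elements $x\in\BV^n(V)$. In $\Delta(x)$ from \eqref{eq:comult in Dleq0} there are three blocks: $x\otimes 1$, $g_x\otimes x$, and lower terms in $(\D^{\leq 0})^{i-n}\otimes(\D^{\leq 0})^{-i}$ for $1\leq i\leq n-1$. The key point is that $\BV^{\geq 1}(V)$ annihilates $1\otimes l$ in $\fW(\lambda)=\D\otimes_{\D^{\leq 0}}\lambda$, because $\BV^{\geq 1}(V)\cdot\lambda=0$ for the $\D^{\leq 0}$-action on $\lambda$. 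Thus the $x\otimes 1$ and lower blocks, whose first tensor factors lie in $\BV^{\geq 1}(V)\cdot\D(G)$, contribute zero, and only the middle block survives, giving $x\cdot((1\otimes l)\otimes(1\otimes m))=(1\otimes g_x l)\otimes(x\otimes m)$. Since $g_x\in G$ acts bijectively on $\lambda$, letting $x$, $l$, $m$ vary recovers all of $(1\otimes\lambda)\otimes\fM(\mu)$.

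Next I would prove by induction on $k$ that $(\BV^k(\oV)\otimes\lambda)\otimes\fM(\mu)\subset\fN$. The case $k=0$ is the previous step. For $k\geq 1$, take $y\in\BV^k(\oV)$ and act on an element $(1\otimes l)\otimes(x\otimes m)\in\fN$ using \eqref{eq:comult in Dgeq0}. The primary summand $y\otimes 1$ yields exactly the target $(y\otimes l)\otimes(x\otimes m)$. The summand $y_{(-1)}\otimes y_{(0)}$ with $y_{(-1)}\in\ku^G$ keeps the first tensor in $1\otimes\lambda$, so it lands in $(1\otimes\lambda)\otimes\fM(\mu)\subset\fN$ by the base case. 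Each remaining summand lies in $(\D^{\geq 0})^{k-i}\otimes(\D^{\geq 0})^i$ with $1\leq i\leq k-1$, whose first factor carries $1\otimes l$ into $\BV^{k-i}(\oV)\otimes\lambda$ with $k-i<k$; by the inductive hypothesis these contributions also lie in $\fN$. Rearranging gives $(y\otimes l)\otimes(x\otimes m)\in\fN$, completing the induction and showing $\fN=\fW(\lambda)\otimes\fM(\mu)$.

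The main bookkeeping obstacle is tracking which summands of \eqref{eq:comult in Dleq0} and \eqref{eq:comult in Dgeq0} vanish on these specific generators; everything reduces to the structural fact that $1\otimes\lambda$ is killed by $V$ in $\fW(\lambda)$, combined with simple degree counting in the higher coproduct blocks. A conceptually cleaner alternative would invoke the Hopf-algebra projection formula $V\otimes(H\otimes_K W)\simeq H\otimes_K(V\otimes W)$ twice to the tower $\D(G)\subset\D^{\geq 0}\subset\D$, producing the chain $\fW(\lambda)\otimes\fM(\mu)\simeq\D\otimes_{\D^{\geq 0}}(\D^{\geq 0}\otimes_{\D(G)}(\lambda\otimes\mu))=\fInd(\lambda\cdot\mu)$, and identifying the composite with $f$ by tracing the generator $1\otimes(l\otimes m)$.
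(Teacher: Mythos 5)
Your proof is correct and takes a genuinely different route from the paper's. Both reduce to the dimension count (from \eqref{eq:triangular for Ind}) plus one direction. The paper proves injectivity: as a $\D^{\leq0}$-module, $\fInd(\lambda\cdot\mu)$ is a direct sum of Verma modules with socle $\xtop\ot\BV(\oV)\ot\lambda\ot\mu$ by \eqref{eq:lw del Ind}, so Remark \ref{obs:morf inj} reduces injectivity of $f$ to nonvanishing of $f$ on this socle; this is verified by extracting, via \eqref{eq:comult in Dleq0} and \eqref{eq:comult in Dgeq0}, the top $\fM(\mu)$-degree component of $f$ applied to a socle element. You instead prove surjectivity from the generators, building $(\BV^k(\oV)\ot\lambda)\ot\fM(\mu)\subseteq\im f$ up through $k$; the coproduct bookkeeping is of the same nature (both rely on $\BV^{\geq1}(V)$ killing $1\ot\lambda$ in $\fW(\lambda)$ and $\BV^{\geq1}(\oV)$ killing $1\ot\mu$ in $\fM(\mu)$) but runs in the opposite direction. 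Neither argument is simpler than the other; the paper's choice merely reuses Remark \ref{obs:morf inj}, a tool already in place. Your sketched alternative via a double tensor identity is also viable and perhaps the cleanest conceptually, but note that the paper's Lemma \ref{le:tensor identity} is stated only in the form $\bigl(A\ot_B U\bigr)\ot\fN\simeq A\ot_B(U\ot\fN)$, with the induced module on the left of the tensor product; moving $\fW(\lambda)$ through $\D\ot_{\D^{\geq0}}(-)$ needs the mirror version with the $A$-module on the left, which requires bijectivity of the antipode and is not a stated result in the paper.
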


\begin{proof}
We will see that $f$ is injective using Remark \ref{obs:morf inj} and hence $f$ is an isomorphism because the modules have the same dimension. 

Let $z\in\soc\fInd(\lambda\cdot\mu)$. By \eqref{eq:lw del Ind}, $z=\xtop \sum_i y_i (h_i\ot k_i)$ where $y_i\in\BV(\oV)$ and $(h_i\ot k_i)\in\lambda\ot\mu$. Then 
\begin{align*}
f(z)=\xtop \sum (y_ih_i)\ot k_i\in \sum & g_{\xtop}(y_ih_i)\ot(\xtop k_i)+\fW(\lambda)\ot\bigl(\oplus_{i=0}^{n_{top}-1}\fM^{-i}(\mu)\bigr),
\end{align*}
using that $\mu$ is a highest-weight, \eqref{eq:comult in Dleq0} and \eqref{eq:comult in Dgeq0}. Then $f$ is injective in $\soc\fInd(\lambda\cdot\mu)$ becuase 
$g_{\xtop}\ot g_{\xtop}\ot\xtop:\BV(\oV)\ot\lambda\ot\mu\longrightarrow\BV(\oV)\ot\lambda\ot\BV^{n_{top}}(V)\mu$ is injective.
\end{proof}

\subsection{Tensor identity} The next lemma is probably known but we have not found any reference. The analogous result in Lie Theory is called ``tensor identity''.

Let $A$ be a Hopf algebra, $B$ a Hopf subalgebra and $R$ a subalgebra such that the multiplication $R\ot B\longrightarrow A$ gives a linear isomorphism. Under this hypothesis $A\ot_{B}(-):{}_{B}\cM\longrightarrow{}_{A}\cM$ is an exact functor since an induced module is $B$-free.

We assume that $R=\oplus_{n\geq0}R(n)$ is graded with $R(0)=\ku$ and the comultiplication of $x\in R(n)$ satisfies
\begin{align}\label{eq:Delta R}
\Delta x=x\_{1}\ot x\_{2}\in x\ot1+\oplus_{j=0}^{n-1} R(j)\ot A,\quad\forall n\geq0.
\end{align}

\begin{lema}\label{le:tensor identity}
Let $\fN$ be an $A$-module, $U$ a $B$-module and $U\ot\fN\longrightarrow\bigl(A\ot_{B}U\bigr)\ot\fN$ the inclusion of $B$-modules given by $u\ot\fn\mapsto1\ot u\ot\fn$. Then the induced morphism 
$$
f:A\ot_{B}\bigl(U\ot\fN\bigr)\longrightarrow\bigl(A\ot_{B}U\bigr)\ot\fN
$$
is an isomorphism of $A$-modules.
\end{lema}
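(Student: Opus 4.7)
The plan is to identify both sides of $f$ with the single vector space $R \otimes U \otimes \fN$ using the triangular decomposition $A \cong R \otimes B$, and then to exploit the $\N$-grading on $R$ together with hypothesis~\eqref{eq:Delta R} to show that $f$ is ``upper triangular'' with the identity on the diagonal. Since it preserves a bounded-below filtration and the associated graded is the identity, it must be bijective.

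First I would observe that, because $R \otimes B \to A$ is a linear isomorphism and $U$, $U \otimes \fN$ are $B$-modules, the natural maps
\[
R \otimes U \longrightarrow A \otimes_{B} U, \qquad R \otimes U \otimes \fN \longrightarrow A \otimes_{B} (U \otimes \fN),
\]
given by $x \otimes u \mapsto x \otimes_{B} u$ and $x \otimes u \otimes \fn \mapsto x \otimes_{B} (u \otimes \fn)$, are linear isomorphisms. Under these identifications the source and target of $f$ both become $R \otimes U \otimes \fN$, and they inherit the $\N$-grading of $R$ and the filtration $F_k := \bigoplus_{j \leq k} R(j) \otimes U \otimes \fN$.

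Next I would compute $f$ on pure tensors. For $x \in R(n)$, $u \in U$, $\fn \in \fN$, the diagonal $A$-action on $(A \otimes_{B} U) \otimes \fN$ yields
\[
f\bigl(x \otimes_{B}(u \otimes \fn)\bigr) \;=\; x \cdot \bigl((1 \otimes_{B} u)\otimes \fn\bigr) \;=\; \sum (x_{(1)} \otimes_{B} u) \otimes x_{(2)}\fn.
\]
Now \eqref{eq:Delta R} gives $\Delta x = x \otimes 1 + \sum_i r_i \otimes a_i$ with $r_i \in R(j_i)$, $j_i < n$, and $a_i \in A$. The crucial point is that the ``lower'' components $r_i$ already lie in $R$ (not merely in $A$), so $r_i \otimes_{B} u$ corresponds literally to $r_i \otimes u \in R(j_i) \otimes U$. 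Hence under the identification with $R \otimes U \otimes \fN$,
\[
f(x \otimes u \otimes \fn) \;=\; x \otimes u \otimes \fn \;+\; \sum_i r_i \otimes u \otimes a_i\fn,
\]
where the leading summand has $R$-degree exactly $n$ and the remaining summands lie in $F_{n-1}$.

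Finally I would conclude that $f$ preserves the filtration $F_k$ and induces the identity on every graded piece $F_k/F_{k-1}$. An easy induction on $k$ (with $F_{-1}=0$) then shows $f$ is bijective on each $F_k$, hence on $R \otimes U \otimes \fN = \bigcup_k F_k$. Being $A$-linear by construction, $f$ is an isomorphism of $A$-modules. The only real obstacle is bookkeeping: one needs the fact that the ``lower'' factors of $\Delta x$ coming from \eqref{eq:Delta R} sit in $R$ and not in $R\cdot B$, since otherwise moving a $B$-component across $\otimes_{B}$ would not obviously drop the $R$-degree. This is exactly what \eqref{eq:Delta R} guarantees, so the triangularity argument goes through cleanly.
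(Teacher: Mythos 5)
Your proof is correct and follows essentially the same strategy as the paper's: identify both sides with $R\ot U\ot\fN$ via the triangular decomposition, filter by the degree in $R$, use \eqref{eq:Delta R} to see that $f$ is upper-triangular with identity on each graded piece, and conclude by induction on the filtration. The one point you emphasize — that the lower-order components of $\Delta x$ lie in $R\ot A$ and not merely in $A\ot A$, so they genuinely drop $R$-degree after passing across $\ot_B$ — is indeed the crux, and it is the same observation the paper makes when it writes $x\_{1}(1\ot u)\ot x\_{2}\fn\in x\ot u\ot \fn+ f(\mathcal{F}_{n-1})$.
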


\begin{proof}
The $A$-modules in question are isomorphic to $R\ot U\ot\fN$ as vector space and are filtered by $\mathcal{F}_n=\oplus_{0\leq j\leq n}R(j)\ot U\ot\fN$. Then we will prove by induction that $f_{|\mathcal{F}_n}$ is bijective for all $n\geq0$ .

For $n=0$ is obvious. We evaluate $f$ in $x\ot u\ot\fn\in R(n)\ot U\ot\fN$ and obtain
\begin{align*}
f\bigr(x\ot u\ot\fn\bigr)&=x\cdot(1\ot u\ot\fn)=x\_{1}(1\ot u)\ot x\_{2}\fn\in x\ot u\ot \fn+ f(\mathcal{F}_{n-1})
\end{align*}
by \eqref{eq:Delta R}. The lemma follows from the inductive hypothesis.
\end{proof}

\subsubsection{Applications} The above lemma applies to $A=\D$, $B=\D^{\geq0}$ and $R=\BV(V)$. 

\begin{lema}\label{le:verma ot hw}
Let $\fM(\lambda)$ be a Verma module and $\mu$ a highest-weight of a module $\fN$. Then $\fM(\lambda \cdot\mu)$ is a submodule of $\fM(\lambda)\ot\fN$. 
\end{lema}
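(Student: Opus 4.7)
The plan is to invoke Frobenius reciprocity for the triangular decomposition: first locate inside $\fM(\lambda)\ot\fN$ a $\D^{\geq 0}$-submodule where $\BV(\oV)$ acts trivially and which carries the $\D(G)$-structure $\lambda\ot\mu$, and then use Remark \ref{obs:morf inj} to show that the resulting $\D$-homomorphism out of $\fM(\lambda\cdot\mu)=\D\ot_{\D^{\geq 0}}(\lambda\ot\mu)$ is injective.

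For the first step I would take the obvious candidate $(1\ot\lambda)\ot\mu$; as a $\D(G)$-module it is manifestly a copy of $\lambda\ot\mu$. To check that every $y\in\BV^n(\oV)$ with $n\geq 1$ kills it, I would expand $\Delta(y)$ via \eqref{eq:comult in Dgeq0} and examine each summand acting on $(1\ot\ell)\ot m$. The principal term $y\ot 1$ vanishes because $y\in\D^{\geq 0}$ can be moved across the tensor product defining $\fM(\lambda)$ and then acts on $\lambda$ as $\varepsilon(y)=0$. The term $y_{(-1)}\ot y_{(0)}$ vanishes because $y_{(0)}\in\BV^n(\oV)$ still has positive degree and $\mu$ is a highest-weight, so $y_{(0)}\cdot m=\varepsilon(y_{(0)})m=0$. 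The remaining summands live in $(\D^{\geq 0})^{n-i}\ot(\D^{\geq 0})^i$ with $i\geq 1$, so their right-hand factor again acts through positive-degree $\BV(\oV)$ on the highest-weight $\mu$ and is killed for the same reason. This produces a $\D^{\geq 0}$-embedding $\lambda\ot\mu\hookrightarrow\fM(\lambda)\ot\fN$, which by the universal property of induction extends to a $\D$-homomorphism $f:\fM(\lambda\cdot\mu)\to\fM(\lambda)\ot\fN$.

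For the second step, $\fM(\lambda\cdot\mu)$ is the direct sum of the Verma modules attached to the irreducible summands of $\lambda\ot\mu$, so by Remark \ref{obs:morf inj} applied on each summand it suffices to check that $\xtop\cdot f(\ell\ot m)\neq 0$ for every nonzero $\ell\ot m\in\lambda\ot\mu$. Expanding $\Delta(\xtop)$ via \eqref{eq:comult in Dleq0}, the principal term $\xtop\ot 1$ contributes $(\xtop\ot\ell)\ot m\in\fM^{-n_{top}}(\lambda)\ot\fN$; the secondary term $g_{\xtop}\ot\xtop$ contributes in $\fM^{0}(\lambda)\ot\fN$; and the remaining summands contribute in $\fM^{j}(\lambda)\ot\fN$ with $j$ strictly between $-n_{top}$ and $0$. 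Projecting onto the bottom degree $\fM^{-n_{top}}(\lambda)\ot\fN$ recovers the map $\ell\ot m\mapsto(\xtop\ot\ell)\ot m$, which is injective because $\xtop\cdot:\lambda\to\BV^{n_{top}}(V)\ot\lambda$ is a $\D(G)$-isomorphism (by \eqref{eq:soc of Verma}). Hence $\xtop\cdot f$ is injective on the highest-weight space, $f$ is injective, and its image is the desired submodule.

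I expect the main obstacle to be the bookkeeping in the two coproduct expansions, and in particular ensuring that the secondary term $g_{\xtop}\ot\xtop$ of $\Delta(\xtop)$ does not spoil the argument even though $\xtop$ may act nontrivially on $\mu\subset\fN$. The resolution is the $\Z$-grading inherited from $\D$: this term lands in a strictly higher degree than the leading $(\xtop\ot\ell)\ot m$ and is therefore invisible to the projection onto $\fM^{-n_{top}}(\lambda)\ot\fN$, which is what ultimately witnesses injectivity.
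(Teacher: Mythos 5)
Your proof is correct but takes a genuinely different route from the paper's. The paper's argument is much shorter: from the hypothesis one has an inclusion $\lambda\ot\mu\hookrightarrow\lambda\ot\fN$ of $\D^{\geq 0}$-modules; applying the exact functor $\D\ot_{\D^{\geq 0}}(-)$ (exactness is automatic since the triangular decomposition makes $\D$ free over $\D^{\geq 0}$) preserves the injection; and the target is identified with $\fM(\lambda)\ot\fN$ by the tensor identity, Lemma \ref{le:tensor identity}. You bypass both the tensor identity and the exactness of induction, instead building the morphism $f$ by hand (checking via \eqref{eq:comult in Dgeq0} that $\BV(\oV)$ annihilates $(1\ot\lambda)\ot\mu$) and proving injectivity through the $\xtop$-criterion and the $\Z$-grading on the first tensor factor. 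The paper's route buys brevity by reusing an already-proved lemma, and also transports painlessly to the graded setting used later for standard filtrations; your route buys a self-contained argument that does not invoke the tensor identity, at the cost of the coproduct bookkeeping. One small imprecision in your write-up: invoking Remark \ref{obs:morf inj} \emph{on each summand} of $\fM(\lambda\cdot\mu)$ only gives injectivity of $f$ on each summand separately, which does not by itself imply injectivity of $f$ on the direct sum. What you actually establish (and what is actually needed) is stronger: $\xtop\cdot f$ is injective on the whole highest-weight space $\lambda\ot\mu$; since any nonzero $\D$-submodule of $\fM(\lambda\cdot\mu)$ must meet the $\D^{\leq 0}$-socle $\xtop\ot(\lambda\ot\mu)$, this does force $\ker f=0$. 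Stating this socle argument explicitly, rather than appealing to Remark \ref{obs:morf inj} summand-by-summand, would make the step airtight.
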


\begin{proof}
By assumption we have an inclusion $\lambda\ot\mu\longrightarrow\lambda\ot\fN$ of $\D^{\geq0}$-modules. As $\D\ot_{\D^{\geq0}}(-)$ is an exact functor
$$
\fM(\lambda\cdot\mu)=\D\ot_{\D^{\geq0}}(\lambda\ot\mu)\longrightarrow\D\ot_{\D^{\geq0}}(\lambda\ot\fN)
$$ 
is an inclusion of $\D$-modules and the lemma follows by Lemma \ref{le:tensor identity}.
\end{proof}

\begin{lema}\label{le:P ot hw}
Let $\fP$ the injective hull of $\fM\bigl(\lambda_{\oV}\cdot\overline{\lambda}\cdot\mu\bigr)$ for some weights $\lambda$ and $\mu$. Then $\fP$ is a direct summand of $\fP(\lambda)\ot\fL(\mu)$.
\end{lema}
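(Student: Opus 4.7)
My strategy is to exhibit $\fP$ as an injective submodule of $\fP(\lambda)\ot\fL(\mu)$. The latter module is injective: it is projective because in any Hopf algebra, tensoring a projective with an arbitrary module yields a projective (since $\D\ot M$ is $\D$-free via the comultiplication), and $\D$ is Frobenius so projective and injective modules coincide. An injective submodule of a finite-dimensional module is automatically a direct summand, hence it suffices to construct the embedding.

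The first step is to embed the Verma module $\fM(\lambda_{\oV}\cdot\overline{\lambda}\cdot\mu)$ into $\fP(\lambda)\ot\fL(\mu)$ by composing
\begin{align*}
\fM\bigl(\lambda_{\oV}\cdot\overline{\lambda}\cdot\mu\bigr)\hookrightarrow \fM\bigl(\lambda_{\oV}\cdot\overline{\lambda}\bigr)\ot\fL(\mu)\hookrightarrow \fP(\lambda)\ot\fL(\mu).
\end{align*}
The first arrow is Lemma \ref{le:verma ot hw} applied with $\fN=\fL(\mu)$, whose highest weight is $\mu$ by construction, and with the role of highest weight played by $\lambda_{\oV}\cdot\overline{\lambda}$. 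The second arrow is obtained by tensoring the inclusion $\fM(\lambda_{\oV}\cdot\overline{\lambda})\hookrightarrow \fP(\lambda)$ from Lemma \ref{le:about P}\ref{item:P ih of verma} with $\fL(\mu)$, which preserves injectivity because the tensor product over $\ku$ is exact.

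The second step is to promote this to an embedding of $\fP$ itself. Injectivity of $\fP(\lambda)\ot\fL(\mu)$ permits extending the composite to a morphism $\fP\to \fP(\lambda)\ot\fL(\mu)$. The kernel of this extension meets $\fM(\lambda_{\oV}\cdot\overline{\lambda}\cdot\mu)$ trivially, and since $\fP$ is an injective hull (hence an essential extension, also after passing to direct sums of Vermas), the kernel must vanish. Combined with the first step, this realises $\fP$ as an injective submodule of $\fP(\lambda)\ot\fL(\mu)$, and therefore as a direct summand.

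The only subtle check is the hypothesis in Lemma \ref{le:verma ot hw}, namely that $\mu$ really appears as a $\D^{\geq 0}$-submodule inside $\fL(\mu)$; this is immediate from $\fL(\mu)$ being the head of $\fM(\mu)=\D\ot_{\D^{\geq 0}}\mu$, whose highest-weight subspace $1\ot\mu$ survives to $\fL(\mu)$. Beyond this verification, the argument is a straightforward chaining of the preceding lemmas together with the standard injective-hull formalism, so I do not anticipate any further obstacle.
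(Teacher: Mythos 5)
Your proof is correct and follows essentially the same route as the paper: you chain Lemma \ref{le:about P}\ref{item:P ih of verma} with Lemma \ref{le:verma ot hw} to embed $\fM(\lambda_{\oV}\cdot\overline{\lambda}\cdot\mu)$ into the injective module $\fP(\lambda)\ot\fL(\mu)$, then invoke the injective-hull property to upgrade this to an embedding of $\fP$. The paper's proof is more terse but uses exactly the same ingredients; you merely spell out the essential-extension argument and the reason $\fP(\lambda)\ot\fL(\mu)$ is injective.
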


\begin{proof}
By Lemma \ref{le:about P} \ref{item:P ih of verma}, $\fP(\lambda)$ is the injective hull of $\fM(\lambda_{\oV}\cdot\overline{\lambda})$. By Lemma \ref{le:verma ot hw}, $\fM\bigl(\lambda_{\oV}\cdot\overline{\lambda}\cdot\mu\bigr)$ is a submodule of $\fP(\lambda)\ot\fL(\mu)$. Since $\fP(\lambda)\ot\fL(\mu)$ is injective, the lemma follows.
\end{proof}

\section{Graded modules}\label{sec:characters}

We start by summarizing some notions about graded modules. 

Let $\fN=\oplus_{i\in\Z}\fN(i)$ be a $\Z$-graded $\D$-module, that is $\D^n\cdot\fN(i)\subseteq\fN(n+i)$ for all $n,i\in\Z$. A morphism $f:\fN\longrightarrow\fN'$ between $\Z$-graded $\D$-modules has degree $\ell$ if $f(\fN(i))\subseteq\fN'(i+\ell)$. The space of such morphisms is denoted by $\Hom^\bullet_{\D}(\fN,\fN')_{\ell}$. Thus, the morphisms in the category of $\Z$-graded $\D$-modules are the $0$-degree ones. The Hom-spaces satisfy $\Hom^\bullet_{\D}(\fN[\ell],\fN')_0=\Hom^\bullet_{\D}(\fN,\fN')_{\ell}=\Hom^\bullet_{\D}(\fN,\fN'[-\ell])_0$ and
\begin{align}\label{eq:Hom y Hom gr}
\Hom_{\D}(\fN,\fN')=\oplus_{\ell\in\Z}\Hom^\bullet_{\D}(\fN,\fN')_{\ell}.
\end{align}

The $\ell$-shift $T_{\ell}$ is an endofunctor in the category of $\Z$-graded $\D$-modules such that $\fN[\ell]=T_{\ell}(\fN)$ is $\fN$ as $\D$-module with homogeneous components $\fN[\ell](i)=\fN(i-\ell)$ for all $i\in\Z$. These functors satisfy $T_{\ell}\circ T_{\ell'}=T_{\ell+\ell'}$ for all $\ell,\ell'\in\Z$. Given a Laurent polynomial $p=\sum_{i\in\Z}a_it^i\in\Z_{\geq0}[t,t^{-1}]$, we set 
\begin{align*}
p\cdot\fN=\oplus_{i\in\Z}\,a_i\cdot T_i(\fN) 
\end{align*}
This induces an action of $\Z[t,t^{-1}]$ over the Grothendieck ring $R^{\bullet}$ of the category of $\Z$-graded $\D$-modules.

\

The modules $\fL(\lambda)[i]$, with $\lambda\in\Lambda$ and $i\in\Z$, form a complete set of non-isomorphic simple $\Z$-graded $\D$-modules. 

\

Since each $\fN(i)$ is a $\D(G)$-module, we define the {\it graded character of} $\fN$ by setting
\begin{align}\label{eq:chgr}
\chgr\fN=\sum_{i\in\Z}\,\ch \fN(i)\,t^i\in K[t,t^{-1}].
\end{align}
Clearly, there exist unique $p_{\fN,\lambda}\in\Z[t,t^{-1}]$, $\lambda\in\Lambda$, such that
\begin{align}\label{eq:chgr en base lambda}
\chgr\fN=\sum_{\lambda\in\Lambda}\,p_{\fN,\lambda}\cdot\lambda.
\end{align}

The tensor product between $\Z$-graded $\D$-modules is $\Z$-graded with the natural grading for tensor product and it holds that
\begin{align}\label{eq:chgr of ot}
\chgr(\fN\ot\fN')=\chgr\fN\cdot\chgr\fN'.
\end{align}
The dual of $\fN$ also is a $\Z$-graded $\D$-module by setting 
\begin{align*}
\fN^*(j)=(\fN(-j))^*. 
\end{align*}
This is compatible with the grading of $\D$ because the antipode is an homogeneous morphism. 

Therefore $\chgr:R^\bullet\longrightarrow K[t,t^{-1}]$ is a ring homomorphism and $\chgr\fN^*=\overline{\chgr\fN}$; where $\overline{p(t,t^{-1})}=p(t^{-1},t)$ for any $p\in K[t,t^{-1}]$.

\subsection{Graded projectives} Thank to \cite[Corollary 3.4]{MR659212} every projective module admits a $\Z$-grading. Moreover, up to a shift, every indecomposable projective module admits  only one $\Z$-grading \cite[Theorem 4.1]{MR659212}. 

For each $\lambda\in\Lambda$ we fix a $\Z$-grading $\fP(\lambda)=\oplus_{n\in\Z}\fP(\lambda)(n)$ such that there is a homogeneous weight  $S$ of degree $0$ generating $\fP(\lambda)$ with $\ch S=\lambda$. Thus, we have a commutative diagram of $\Z$-graded $\D$-module epimorphisms and a section
\begin{align}
\begin{tikzpicture}[baseline=(current bounding box.center)]
\node (P) at (-2,0) {$\fP(\lambda)$};
\node (L) at (0,0) {$\fL(\lambda)$.};
\node (Ind) at (0,1.5) {$\fInd(\lambda)$};
\draw (P) edge[->] (L); %node[right] {$p$}
\draw (P) edge[bend left=30,dashed, ->] (Ind);
\draw (Ind) edge[->]  (L);
\draw (Ind) edge[->]  (P);
\end{tikzpicture}
\end{align} 
Also, the diagram on the right hand of \eqref{eq:LMPML} is of $\Z$-graded $\D$-modules.

Given $\lambda\in\Lambda$, we denote $l_\lambda$ the minimal degree of $\fL(\lambda)$. Thus, \eqref{eq:omga sobre L} implies that
\begin{align}\label{eq:dual gr Llambda}
\bigl(\fL(\lambda)[i]\bigr)^*\simeq\fL\left(\overline{\lambda}^*\right)[-i-l_\lambda]\quad\mbox{and}\quad l_\lambda=l_{\overline{\lambda}^*}.
\end{align}

\begin{lema}\label{le:about P gr}
Let $\lambda$ be a weight and $i\in\Z$. In the category of $\Z$-graded $\D$-modules it holds that
\begin{enumerate}[label=(\roman*)]
\item\label{item:P es proj cov inj hull gr} $\fP(\lambda)[i]$ is the projective cover and the injective hull of $\fL(\lambda)[i]$.
\smallskip
\item\label{item:P pc of verma gr} $\fP(\lambda)[i]$ is the projective cover of $\fM(\lambda) [i]$.
\smallskip
\item\label{item:P ih of verma gr} $\fP(\lambda)[i]$ is the injective hull of $\fM\left(\lambda_{\oV}\cdot\overline{\lambda}\right)[i+l_\lambda+n_{top}]$.
\smallskip
\item\label{item:P pc of coverma gr} $\fP(\lambda)[i]$ is the projective cover of $\fW(\overline{\lambda})[i+l_\lambda]$.
\smallskip
\item\label{item:P ih of coverma gr} $\fP(\lambda)[i]$ is the injective hull of $\fW\left(\lambda_V\cdot\lambda\right)[i-n_{top}]$.
\smallskip
\item\label{item:P dual gr} $\left(\fP(\lambda)[i]\right)^*\simeq\fP\left(\overline{\lambda}^*\right)[-i-l_\lambda]$.
\end{enumerate}
\end{lema}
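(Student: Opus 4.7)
The plan is to propagate each ungraded assertion of Lemma~\ref{le:about P} through the graded setting by tracking how the canonical morphisms interact with the chosen grading on $\fP(\lambda)$. The bulk of the work is degree bookkeeping; only one step is genuinely non-formal.

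I would begin with the projective-cover halves. The generator $S$ of $\fP(\lambda)$ sits in degree $0$ with weight $\lambda$; the canonical surjection of Lemma~\ref{le:about P}\ref{item:P pc of verma} sends $S$ to $1\ot\lambda\in\fM(\lambda)(0)$, so it is homogeneous of degree $0$, giving (ii) after shifting by $[i]$. Composing with the degree-$0$ surjection $\fM(\lambda)\twoheadrightarrow\fL(\lambda)$ yields the projective-cover part of (i). For (iv), the canonical map $\fW(\overline{\lambda})\twoheadrightarrow\fL(\lambda)$ sends the generator $1\ot\overline{\lambda}$ at degree $0$ to the lowest-weight vector of $\fL(\lambda)$, which sits at degree $l_\lambda$ in the standard grading; hence the graded head of $\fW(\overline{\lambda})$ is $\fL(\lambda)[-l_\lambda]$, and its graded projective cover by (i) is $\fP(\lambda)[-l_\lambda]$, giving (iv) after shifting.

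The key non-formal input is the injective-hull half of (i). Since $(\D^n,\D^m)=0$ unless $n+m=0$, the Frobenius form on $\D$ is homogeneous of degree $0$. The resulting graded symmetric-algebra duality takes the form
\[
\Hom^{\bullet}_\D(N,M)_j\simeq\bigl(\Hom^{\bullet}_\D(M,N)_{-j}\bigr)^{*}
\]
for any graded $\D$-modules $M,N$. Applied to $M=\fP(\lambda)$, $N=\fL(\lambda)$, $j=0$: the right-hand side is the $\ku$-dual of the one-dimensional space of graded head maps $\fP(\lambda)\twoheadrightarrow\fL(\lambda)[0]$, so the left-hand side is also one-dimensional, producing a non-zero graded embedding $\fL(\lambda)[0]\hookrightarrow\fP(\lambda)$. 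Since the ungraded socle of $\fP(\lambda)$ is the simple module $\fL(\lambda)$ by Lemma~\ref{le:about P}\ref{item:P es injective hull}, this forces $\soc\fP(\lambda)\simeq\fL(\lambda)[0]$, completing (i). Part (vi) is then immediate: $\fP(\lambda)[i]^{*}$ is graded indecomposable projective and its head---the $\ku$-dual of $\soc\fP(\lambda)[i]=\fL(\lambda)[i]$---equals $\fL(\overline{\lambda}^{*})[-i-l_\lambda]$ by \eqref{eq:dual gr Llambda}; by (i) the graded projective cover of this simple is $\fP(\overline{\lambda}^{*})[-i-l_\lambda]$.

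Parts (iii) and (v) then follow by graded-dualising (ii) and (iv). Tracking top-degree weights yields the graded identities $\fM(\mu)^{*}\simeq\fM((\lambda_V\cdot\mu)^{*})[n_{top}]$ (from \eqref{eq:dual of vermas}) and $\fW(\mu)^{*}\simeq\fW(\lambda_V\cdot\mu^{*})[-n_{top}]$ (from \eqref{eq:dual of covermas por grado}). Dualising the instance $\fP(\overline{\lambda}^{*})[-i-l_\lambda]\twoheadrightarrow\fM(\overline{\lambda}^{*})[-i-l_\lambda]$ of (ii) and applying (vi) on the source produces a graded embedding
\[
\fM(\lambda_{\oV}\cdot\overline{\lambda})[i+l_\lambda+n_{top}]\hookrightarrow\fP(\lambda)[i],
\]
which together with Lemma~\ref{le:about P}\ref{item:P ih of verma} yields (iii). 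The same procedure applied to (iv) using the $\fW$-duality gives (v). The main obstacle is the single non-formal step---producing the graded embedding $\fL(\lambda)[0]\hookrightarrow\fP(\lambda)$ via graded symmetric-Frobenius duality---after which every remaining claim reduces to degree bookkeeping from the canonical morphisms.
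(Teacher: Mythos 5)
Your argument is correct and follows essentially the same outline as the paper's: the only genuinely non-formal step is \ref{item:P es proj cov inj hull gr}, and both you and the paper reduce it to the fact that the symmetric bilinear form on $\D$ is homogeneous of degree $0$, i.e.\ $(\D^n,\D^m)=0$ for $n+m\neq 0$. The paper phrases this by picking a homogeneous primitive idempotent $e\in\D^0$ with $\fP(\lambda)\simeq\D e$ and using $(1,e\fS(0))=(1,e\fS)\neq 0$ for the homogeneous socle $\fS$, following the Curtis--Reiner proof that head equals socle for symmetric algebras; you package the same fact as a graded Nakayama duality on Hom spaces. These are two ways of saying the same thing. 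Just note that the duality $\Hom^\bullet_\D(N,M)_j\simeq\bigl(\Hom^\bullet_\D(M,N)_{-j}\bigr)^*$ should be asserted only when one of $M,N$ is projective (it comes from the graded bimodule isomorphism $\D\simeq\D^\vee$ applied to $\D e$); that is exactly the case you use, so there is no harm, but the ``for any graded $\D$-modules'' phrasing is an overstatement.

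There is one small gap in your derivation of \ref{item:P ih of coverma gr}. You dualise \ref{item:P pc of coverma gr} and invoke a graded $\D$-module isomorphism $\fW(\mu)^*\simeq\fW(\lambda_V\cdot\mu^*)[-n_{top}]$, citing \eqref{eq:dual of covermas por grado}. But \eqref{eq:dual of covermas por grado} is only an isomorphism of $\D(G)$-modules, i.e.\ a graded character identity; it does not by itself say that the dual of a co-Verma module is a co-Verma module. (By contrast the graded Verma isomorphism $\fM(\mu)^*\simeq\fM((\lambda_V\cdot\mu)^*)[n_{top}]$ that you use for \ref{item:P ih of verma gr} does follow from \eqref{eq:dual of vermas} together with the degree of the highest weight.) The missing module isomorphism is true and is proved by the same $\ytop$-argument that gives \eqref{eq:dual of vermas}, so the gap is easy to fill; alternatively you can bypass it and obtain \ref{item:P ih of verma gr} and \ref{item:P ih of coverma gr} directly from \ref{item:P es proj cov inj hull gr}, the ungraded Lemma~\ref{le:about P}, and degree bookkeeping on the embeddings of \eqref{eq:LMPML} (the lowest-weight space $\lambda_V\cdot\lambda$ inside $\fP(\lambda)$ sits in degree $-n_{top}$ by Lemma~\ref{lema:projective decompose into verma} and \eqref{eq:soc of Verma}), which is in effect what the paper does.
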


\begin{proof}
\ref{item:P es proj cov inj hull gr} The first part is clear. Let $e\in\D^0$ be a primitive idempotent such that $\fP(\lambda)\simeq\D e$ \cite[Proposition 5.8 (iii)]{MR659212}, and $\fS$ be the socle of $\D e$ which is homogeneous by \cite[Proposition 3.5 (ii)]{MR659212}. Since the symmetric bilinear form of $\D$ satisfies $(1,e\fS(0))=(1,e\fS)$, recall Section \ref{sec:Preliminaries}, we can reason like in the proof of \cite[(9.12)]{MR1038525} to show that $e\fS(0)\neq0$. Hence $\fP(\lambda)[i]$ is the injective hull of $\fL(\lambda)[i]$.

\ref{item:P dual gr} follows from Lemma \ref{le:about P} \ref{item:P es injective hull} and \eqref{eq:dual gr Llambda}.

The remaining items are deduced from \ref{item:P es proj cov inj hull gr} and Lemma \ref{le:about P} \ref{item:P ih of verma}.
\end{proof}

\

Let $\fP$ be a projective module with a fixed $\Z$-grading. Therefore there exist unique polynomials $p_{\fP,\fP(\lambda)}\in\Z_{\geq0}[t,t^{-1}]$ such that
\begin{align}
\label{eq:proj D mod}
\fP\simeq\oplus_{\lambda\in\Lambda}p_{\fP,\fP(\lambda)}\cdot\fP(\lambda)\,\mbox{ if and only if }\,
\chgr\fP=\sum_{\lambda\in\Lambda}p_{\fP,\fP(\lambda)}\chgr\fP(\lambda)
\end{align}
by \cite[Proposition 5.8 (iii)]{MR659212}. If $[\fP:\fP(\lambda)]=p_{\fP,\fP(\lambda)}(1)$, then
\begin{align}
\fP\simeq\oplus_{\lambda\in\Lambda}[\fP:\fP(\lambda)]\cdot\fP(\lambda)
\end{align}
as ungraded modules.

\

Let us consider $\fP$ as a $\D^{\leq0}$-module. Since $\D^{\leq0}$ is a graded subalgebra of $\D$, $\fP$ also is a projective $\Z$-graded $\D^{\leq0}$-module. Hence, by \cite[Proposition 5.8 (iii)]{MR659212} and Lemma \ref{le:verma is proj cover}, there exist unique polynomials 
$p_{\fP,\fM(\lambda)}\in\Z_{\geq0}[t,t^{-1}]$ such that
\begin{align}\label{eq:P pol verma as Dleq}
\fP\simeq\oplus_{\lambda\in\Lambda}\,p_{\fP,\fM(\lambda)}\cdot\fM(\lambda)\quad\mbox{as $\D^{\leq0}$-modules}
\end{align}
if and only if
\begin{align}
\chgr\fP=\sum_{\lambda\in\Lambda}p_{\fP,\fM(\lambda)}\,\chgr\fM(\lambda).
\end{align}
Moreover, for each $\lambda\in\Lambda$ and $i\in\Z$, we assume that the lowest-weight $\lambda_V\cdot\lambda$ is concentrated in degree $0$ and set
\begin{align}\label{eq:definition of a fP fMlambda i}
a_{\fP,\fM(\lambda),i}=\dim\Hom_{\D^{\leq0}}^{\bullet}(\lambda_V\cdot\lambda,\fP)_{i-n_{top}}.
\end{align}
Therefore $\dim\Hom_{\D^{\leq0}}(\lambda_V\cdot\lambda,\fP)=p_{\fP,\fM(\lambda)}(1)$ and
\begin{align}\label{eq:def of p fP fMlambda i}
p_{\fP,\fM(\lambda)}=\sum_{i}a_{\fP,\fM(\lambda),i}\,t^i\in\Z[t,t^{-1}]
\end{align}
by Lemma \ref{lema:projective decompose into verma}. 

\

In particular, we have that $p_{\fInd(\mu),\fP(\lambda)}=\overline{p_{\fL(\lambda),\mu}}$ and  $p_{\fInd(\mu),\fM(\lambda)}=p_{\BV(\oV)\ot\mu,\lambda}$ for all $\lambda,\mu\in\Lambda$. Thus, we obtain graded versions of \eqref{eq:Ind as sum of Ps} and \eqref{eq:Ind as sum of vermas}, that is
\begin{align}
\fInd(\mu)\simeq\oplus_{\lambda\in\Lambda}\,\overline{p_{\fL(\lambda),\mu}}\cdot\fP(\lambda)\simeq\fM(\chgr\BV(\oV)\cdot\mu)
\end{align}
as $\Z$-graded $\D$-modules and $\Z$-graded $\D^{\leq0}$-modules, respectively.

\begin{rmk}
Let $R^\bullet_{proj}$ be the Grothendieck ring of the subcategory of projective modules. Clearly, $\{\chgr\fP(\lambda)\mid\lambda\in\Lambda\}$ is a $\Z[t,t^{-1}]$-bases of 
$R^\bullet_{proj}$. Moreover, the sets $\{\chgr\fM(\lambda)\mid\lambda\in\Lambda\}$ and $\{\chgr\fW(\lambda)\mid\lambda\in\Lambda\}$ so are by \eqref{eq:P pol verma as Dleq} and \eqref{eq:P 
pol verma as Dgeq}.  
\end{rmk}

\subsection{The simple modules are identified by their graded characters}

Let $\fN=\oplus_{i\in\Z}\fN(i)$ be a $\Z$-graded $\D$-module. For each $\lambda\in\Lambda$ and $i\in\Z$, we define
\begin{align}\label{eq:definition of a fN fLlambda i}
a_{\fN,\fL(\lambda),i}=\dim\Hom^\bullet_{\D}(\fP(\lambda)[i],\fN)_0=\dim\Hom^\bullet_{\D}(\fN,\fP(\lambda)[i])_0;
\end{align}
these dimensions are equal by Lemma \ref{le:about P gr} \ref{item:P es proj cov inj hull gr}. It is the number of composition factors of $\fN$ isomorphic to $\fL(\lambda)[i]$. We also define the Laurent polynomial 
\begin{align}\label{eq:def of p fN fLlambda i}
p_{\fN,\fL(\lambda)}=\sum_{i}a_{\fN,\fL(\lambda),i}\,t^i\in\Z[t,t^{-1}]. 
\end{align}
 
By the next theorem the ring morphism $\chgr:R^{\bullet}\longrightarrow K[t,t^{-1}]$ is injective.

\begin{theorem}\label{teo:chgr}
The set $\{\chgr\fL(\lambda)\mid\lambda\in\Lambda\}$ is a $\Z[t,t^{-1}]$-basis of $R^\bullet$. More explicitly, if $\fN=\oplus_{i\in\Z}\fN(i)$ is a $\Z$-graded $\D$-module, then the Laurent polynomials $p_{\fN,\fL(\lambda)}$ are the unique ones such that
\begin{align*}
\chgr\fN=\sum_{\lambda\in\Lambda}p_{\fN,\fL(\lambda)}\,\chgr\fL(\lambda).
\end{align*}
Therefore $[\fN:\fL(\lambda)]=p_{\fN,\fL(\lambda)}(1)$.
\end{theorem}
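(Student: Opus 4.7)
The plan is a two-step strategy: first establish that $\{\chgr\fL(\lambda):\lambda\in\Lambda\}$ is $\Z[t,t^{-1}]$-linearly independent in $K[t,t^{-1}]$; then use a composition-series argument to show spanning and to match the coefficients with the Laurent polynomials defined in \eqref{eq:def of p fN fLlambda i}.

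For linear independence, the key observation is that in the fixed grading, $\fL(\lambda)$ lives in degrees $\leq 0$ and its top homogeneous component satisfies $\fL(\lambda)(0)\simeq\lambda$ as a $\D(G)$-module. Indeed, by the triangular decomposition $\fM(\lambda)\simeq\BV(V)\ot\lambda$ is concentrated in degrees $-n_{top},\dots,0$ with $\fM(\lambda)(0)=1\ot\lambda$; since $\fL(\lambda)$ is its simple head with the inherited grading, the component $\fL(\lambda)(0)$ is $\lambda$. Now suppose $\sum_{\lambda\in\Lambda}q_\lambda\,\chgr\fL(\lambda)=0$ with $q_\lambda\in\Z[t,t^{-1}]$ not all zero, and let $D$ be the maximum of $\deg_t q_\lambda$ over those $\lambda$ with $q_\lambda\neq0$. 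The coefficient of $t^D$ on the left-hand side equals $\sum_{\lambda:\deg_t q_\lambda=D}c_\lambda\cdot\lambda$ in $K$, where $c_\lambda\neq0$ is the leading coefficient of $q_\lambda$. Since $\Lambda$ is a $\Z$-basis of $K$, this sum cannot vanish, a contradiction. This also gives uniqueness of any such expansion of $\chgr\fN$.

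For spanning and coefficient identification, choose a composition series of $\fN$ in the category of $\Z$-graded $\D$-modules, whose factors are of the form $\fL(\lambda_j)[i_j]$. Additivity of $\chgr$ on short exact sequences together with $\chgr(\fL(\lambda)[i])=t^i\,\chgr\fL(\lambda)$ yields
\begin{align*}
\chgr\fN=\sum_{\lambda\in\Lambda}q_\lambda\,\chgr\fL(\lambda),\qquad q_\lambda=\sum_{i\in\Z}b_{\lambda,i}\,t^i,
\end{align*}
where $b_{\lambda,i}$ is the number of composition factors isomorphic to $\fL(\lambda)[i]$. By Lemma \ref{le:about P gr} \ref{item:P es proj cov inj hull gr}, $\fP(\lambda)[i]$ is the projective cover of $\fL(\lambda)[i]$, so $\Hom_{\D}^{\bullet}(\fP(\lambda)[i],-)_0$ is exact and takes value $\ku$ on $\fL(\mu)[j]$ precisely when $(\mu,j)=(\lambda,i)$, and zero otherwise. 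Applying it to the composition series of $\fN$ gives $a_{\fN,\fL(\lambda),i}=b_{\lambda,i}$, hence $p_{\fN,\fL(\lambda)}=q_\lambda$. Specializing $t=1$ then produces $[\fN:\fL(\lambda)]=p_{\fN,\fL(\lambda)}(1)$.

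The argument is essentially bookkeeping once the grading conventions are in place; the conceptual content is the observation that the top-degree component of $\fL(\lambda)$ isolates the basis element $\lambda\in K$. This is precisely what makes graded characters succeed in distinguishing the $\fL(\lambda)$, whereas their ungraded characters can coincide, as the $\FK_3\#\ku\Sn_3$ example in the introduction illustrates.
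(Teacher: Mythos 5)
Your proof is correct and follows essentially the same two-step strategy as the paper: linear independence via the triangular ("unitriangular") shape of $\chgr\fL(\lambda) = \lambda + \sum_{j<0}\ch(\fL(\lambda)(j))\,t^j$, and identification of the coefficients with the graded multiplicities via a composition series in the category of $\Z$-graded modules together with the exactness of $\Hom^\bullet_{\D}(\fP(\lambda)[i],-)_0$. The only cosmetic difference is that you isolate the top $t$-degree $D$ directly where the paper first shifts all $p_\lambda$ into $\Z[t^{-1}]$ before comparing leading terms.
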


\begin{proof}
Suppose that $0=\sum_{\lambda\in\Lambda}p_{\lambda}\,\chgr\fL(\lambda)$ with $p_{\lambda}\in\Z[t,t^{-1}]$. Multiplying by a suitable $t^{j}$ with $j<0$, we can assume that $p_{\lambda}\in\Z[t^{-1}]$ for all $\lambda$. Then, we deduce that $p_{\lambda}=0$ for all $\lambda$ since $\chgr\fL(\lambda)=\lambda+\sum_{j<0}\ch\bigl(\fL(\lambda)(j)\bigr)t^j$, and the uniqueness follows.

Let $\fL$ be a simple submodule of $\fN$. As $\fL$ is graded and a highest-weight module, there is $i\in\Z$ such that $\fL(j)\subseteq\fN(i+j)$ and 
the quotient $\fN/\fL$ is a $\Z$-graded $\D$-module. Then $\chgr\fN=t^i\chgr\fL+\chgr\fN/\fL$ and the theorem follows by induction on the length of $\fN$; we use that $\fP(\lambda)[i]$ is a $\Z$-graded projective module and then $\Hom_{\D}(\fP(\lambda)[i],-)_0$ is an exact functor.

The equality $[\fN:\fL(\lambda)]=p_{\fN,\fL(\lambda)}(1)$ is clear.
\end{proof}

\subsection{Graded character identities}
Irving has defined a class of highest weight categories for which the BGG Reciprocity holds \cite{MR1080852}. These categories have a duality functor $\delta$ which is trivial on simple modules. Using the order on the weights, he shows that the characters of a Verma module $M$ and $\delta M$ are equal.

Although we do not have either such a duality or an order on the weights, we can prove the following identities among the graded characters of Verma modules, their duals and the modules $\fW(\lambda)=\D\ot_{\D^{\leq0}}\lambda$, recall \eqref{def:coVerma}. Such identities are key in the proof of the BGG Reciprocity.

\begin{theorem}\label{teo:chgr de fM y fW}
Let $\lambda$ be a weight. Then
\begin{align*}
t^{n_{top}}\,\chgr\fW(\lambda_V\cdot\lambda)^*\underset{(i)}{=}
\chgr\fW(\lambda^*)\underset{(ii)}{=}
\chgr\fM(\lambda)^*\underset{(iii)}{=}
t^{n_{top}}\,\chgr\fM\left((\lambda_V\cdot\lambda)^*\right).
\end{align*}
\end{theorem}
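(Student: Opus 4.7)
The plan is to prove all three equalities directly by unpacking the graded characters into sums over homogeneous components and applying the three duality isomorphisms already established: \eqref{eq:nichols de oV}, \eqref{eq:dual of vermas por grado}, and \eqref{eq:dual of covermas por grado}. The entire argument is grading bookkeeping.

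I would start by recording the homogeneous components. By the triangular decomposition, together with $\BV^i(V)\subset\D^{-i}$ and $\BV^j(\oV)\subset\D^j$, one has $\fM(\lambda)(-i)=\BV^i(V)\ot\lambda$ and $\fW(\mu)(j)=\BV^j(\oV)\ot\mu$ for $0\le i,j\le n_{top}$, as $\D(G)$-modules. Dualization flips the grading via $\fN^*(j)=\fN(-j)^*$, so $\fM(\lambda)^*$ is concentrated in degrees $[0,n_{top}]$ while $\fW(\mu)^*$ is concentrated in degrees $[-n_{top},0]$. With these descriptions, (ii) follows immediately from \eqref{eq:nichols de oV}: the degree-$j$ component of $\fM(\lambda)^*$ is $(\BV^j(V)\ot\lambda)^*\simeq\BV^j(V)^*\ot\lambda^*\simeq\BV^j(\oV)\ot\lambda^*=\fW(\lambda^*)(j)$, and summing over $j$ yields $\chgr\fM(\lambda)^*=\chgr\fW(\lambda^*)$.

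For (iii), \eqref{eq:dual of vermas por grado} identifies $\fM(\lambda)^*(j)=(\BV^j(V)\ot\lambda)^*\simeq\BV^{n_{top}-j}(V)\ot(\lambda_V\cdot\lambda)^*=\fM((\lambda_V\cdot\lambda)^*)(j-n_{top})$, and reindexing the character sum produces the shift $t^{n_{top}}$. Identity (i) is parallel: \eqref{eq:dual of covermas por grado} gives $\fW(\lambda_V\cdot\lambda)^*(-k)\simeq\BV^{n_{top}-k}(\oV)\ot\lambda^*=\fW(\lambda^*)(n_{top}-k)$ for $0\le k\le n_{top}$, and the substitution $m=n_{top}-k$ produces $t^{n_{top}}\chgr\fW(\lambda_V\cdot\lambda)^*=\chgr\fW(\lambda^*)$. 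The only potential pitfall is tracking the shift: the factor $t^{n_{top}}$ in (i) and (iii) appears precisely because \eqref{eq:dual of vermas por grado} and \eqref{eq:dual of covermas por grado} pair degree $j$ with codegree $n_{top}-j$, whereas (ii) carries no shift because \eqref{eq:nichols de oV} is degree-preserving.
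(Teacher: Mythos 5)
Your argument is correct and follows essentially the same route as the paper: (ii) from \eqref{eq:nichols de oV}, and (i), (iii) from \eqref{eq:dual of covermas por grado} and \eqref{eq:dual of vermas por grado} respectively, with the $t^{n_{top}}$ shift arising from reindexing degree $j\leftrightarrow n_{top}-j$. The paper states these three consequences more tersely, while you make the degree bookkeeping explicit, but the underlying proof is identical.
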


\begin{proof}
We have that $\left(\BV^{j}(V)\ot\lambda\right)^*\simeq\BV^{j}(V)^*\ot\lambda^*\simeq\BV^{j}(\oV)\ot\lambda^*$ 
by \eqref{eq:nichols de oV} and hence $\chgr\fW(\lambda^*)=\chgr\fM(\lambda)^*$, it is (ii). 

On the other hand, \eqref{eq:dual of vermas por grado} and \eqref{eq:dual of covermas por grado} imply  (i) and (iii), respectively:
\begin{align*}
\chgr\fM(\lambda)^*=t^{n_{top}}\chgr\fM\left((\lambda_V\cdot\lambda)^*\right)\quad\mbox{and}\quad t^{n_{top}}\chgr\fW(\lambda_V\cdot\lambda)^*=\chgr\fW(\lambda^*).
\end{align*}
\end{proof}

% \begin{align*}
% [\fW(\lambda_V\cdot\lambda)^*:\fL(\mu)]=[\fW(\lambda^*):\fL(\mu)]=[\fM(\lambda)^*:\fL(\mu)]=[\fM\left((\lambda_V\cdot\lambda)^*\right):\fL(\mu)]. 
% \end{align*} 

We compare the graded composition factors of a Verma module and its dual using that $\{\chgr\fL(\lambda)\mid\lambda\in\Lambda\}$ is a $\Z[t,t^{-1}]$-basis. Recall that $\fM(\mu)^*\simeq\fM\left((\lambda_V\cdot\mu)^*\right)$ by \eqref{eq:dual of vermas}.

\begin{cor}\label{cor:fMlambda fLmu}
Let $\lambda$ and $\mu$ be weights. Then
\begin{align*}
t^{l_\mu+n_{top}}\,p_{\fM(\lambda),\fL(\mu)}=\,\overline{p_{\fM\left((\lambda_V\cdot\lambda)^*\right),\fL(\overline{\mu}^*)}}.
\end{align*}
In particular, $[\fM(\lambda):\fL(\mu)]=[\fM\left((\lambda_V\cdot\lambda)^*\right):\fL(\overline{\mu}^*)]$ by evaluating the above polynomials at $t=t^{-1}=1$.
\end{cor}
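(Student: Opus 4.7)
The plan is to combine three ingredients: (a) the expansion of $\chgr\fM(\lambda)$ in the basis $\{\chgr\fL(\mu)\}$ provided by Theorem \ref{teo:chgr}, (b) the identification of the dual character $\chgr\fM(\lambda)^{*}$ supplied by Theorem \ref{teo:chgr de fM y fW}(iii), and (c) the behaviour of $\chgr\fL(\mu)$ under duality coming from \eqref{eq:dual gr Llambda}. The uniqueness part of Theorem \ref{teo:chgr} will then extract the coefficient identity we want.

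Concretely, I would first write
\begin{align*}
\chgr\fM(\lambda)=\sum_{\mu\in\Lambda}p_{\fM(\lambda),\fL(\mu)}\,\chgr\fL(\mu)
\end{align*}
and apply the bar involution. Since $\chgr\fN^{*}=\overline{\chgr\fN}$ as noted in Section \ref{sec:characters}, and since \eqref{eq:dual gr Llambda} gives $\bigl(\fL(\mu)\bigr)^{*}\simeq\fL(\overline{\mu}^{*})[-l_{\mu}]$, i.e.\ $\overline{\chgr\fL(\mu)}=t^{-l_{\mu}}\chgr\fL(\overline{\mu}^{*})$, this produces
\begin{align*}
\chgr\fM(\lambda)^{*}=\sum_{\mu\in\Lambda}\overline{p_{\fM(\lambda),\fL(\mu)}}\,t^{-l_{\mu}}\,\chgr\fL(\overline{\mu}^{*}).
\end{align*}
On the other hand, Theorem \ref{teo:chgr de fM y fW}(iii) gives $\chgr\fM(\lambda)^{*}=t^{n_{top}}\,\chgr\fM\bigl((\lambda_V\cdot\lambda)^{*}\bigr)$.

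Combining these two expressions for $\chgr\fM(\lambda)^{*}$ and dividing by $t^{n_{top}}$ yields
\begin{align*}
\chgr\fM\bigl((\lambda_V\cdot\lambda)^{*}\bigr)=\sum_{\mu\in\Lambda}\overline{p_{\fM(\lambda),\fL(\mu)}}\,t^{-l_{\mu}-n_{top}}\,\chgr\fL(\overline{\mu}^{*}).
\end{align*}
Since $\mu\mapsto\overline{\mu}^{*}$ is a bijection on $\Lambda$ (the composition of two bijections recorded in Section \ref{notation:L y S} and \eqref{eq:dual gr Llambda}) and since $l_{\mu}=l_{\overline{\mu}^{*}}$ by \eqref{eq:dual gr Llambda}, the right-hand side is exactly the expansion of $\chgr\fM\bigl((\lambda_V\cdot\lambda)^{*}\bigr)$ in the basis $\{\chgr\fL(\nu)\}_{\nu\in\Lambda}$ with $\nu=\overline{\mu}^{*}$. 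The uniqueness assertion of Theorem \ref{teo:chgr} then forces
\begin{align*}
p_{\fM\bigl((\lambda_V\cdot\lambda)^{*}\bigr),\fL(\overline{\mu}^{*})}=\overline{p_{\fM(\lambda),\fL(\mu)}}\,t^{-l_{\mu}-n_{top}},
\end{align*}
and applying the bar involution and multiplying by $t^{l_{\mu}+n_{top}}$ yields the claimed identity. Evaluating at $t=1$ gives the multiplicity statement.

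The only place where one has to be careful is the bookkeeping of the two shifts: the $t^{n_{top}}$ coming from Theorem \ref{teo:chgr de fM y fW}(iii) and the $t^{-l_{\mu}}$ produced by dualising each simple module. The identity $l_{\mu}=l_{\overline{\mu}^{*}}$ is what makes the re-indexing on the right-hand side coherent, so this is the step that deserves the most attention; everything else is a direct manipulation of the ring $K[t,t^{-1}]$.
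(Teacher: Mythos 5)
Your argument is correct, and it reaches the intermediate identity $p_{\fM((\lambda_V\cdot\lambda)^*),\fL(\overline{\mu}^*)}=t^{-l_\mu-n_{top}}\,\overline{p_{\fM(\lambda),\fL(\mu)}}$ exactly as needed, but by a genuinely different route than the paper. The paper dualises the Hom-space $\Hom^\bullet_{\D}(\fP(\mu)[i],\fM(\lambda))_0$ and feeds in the graded duality of the indecomposable projectives, Lemma~\ref{le:about P gr}~\ref{item:P dual gr}, to read off $a_{\fM(\lambda),\fL(\mu),i}=a_{\fM(\lambda)^*,\fL(\overline{\mu}^*),-l_\mu-i}$; this gives $p_{\fM(\lambda),\fL(\mu)}=t^{-l_\mu}\,\overline{p_{\fM(\lambda)^*,\fL(\overline{\mu}^*)}}$, after which Theorem~\ref{teo:chgr de fM y fW}(iii) finishes the job. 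You instead stay entirely in the ring $K[t,t^{-1}]$: you apply the bar involution to the expansion of $\chgr\fM(\lambda)$ in the $\{\chgr\fL(\mu)\}$ basis, use $\chgr\fN^*=\overline{\chgr\fN}$ and the shift coming from the simple-module duality \eqref{eq:dual gr Llambda}, substitute Theorem~\ref{teo:chgr de fM y fW}(iii), and then invoke the uniqueness clause of Theorem~\ref{teo:chgr}. The two arguments are two faces of the same duality (Lemma~\ref{le:about P gr}~\ref{item:P dual gr} is itself derived from \eqref{eq:dual gr Llambda}), so neither buys more generality, but yours has the advantage of never leaving the character ring, which makes the bookkeeping of the shifts $t^{n_{top}}$ and $t^{-l_\mu}$ very transparent; the paper's has the advantage of making the Hom-space interpretation of the multiplicities explicit. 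One small remark: you flag the identity $l_\mu=l_{\overline{\mu}^*}$ as the delicate re-indexing point, but in fact the uniqueness argument does not require it, since you may simply write the sum over $\nu=\overline{\mu}^*$ and compare coefficients directly; the identity is true but not needed here.
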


\begin{proof}
We have the linear isomorphisms
\begin{align*}
\Hom^\bullet_{\D}(\fP(\mu)[i],\fM(\lambda))_0
&\simeq\Hom^\bullet_{\D}(\fM(\lambda)^*,(\fP(\mu)[i])^*)_0\\
&\simeq\Hom^\bullet_{\D}(\fM(\lambda)^*,\fP\left(\overline{\mu}^*\right)[-l_\mu-i])_0.
\end{align*}
Then $a_{\fM(\lambda),\fL(\mu),i}=a_{\fM(\lambda)^*,\fL(\overline{\mu}^*),-l_{\mu}-i}$ and hence 
$p_{\fM(\lambda),\fL(\mu)}=t^{-l_\mu}\,\overline{p_{\fM(\lambda)^*,\fL(\overline{\mu}^*)}}$. Thus, the corollary follows from Theorem \ref{teo:chgr de fM y fW}.
\end{proof}

We do not know yet whether the projective modules are filtered by Verma modules. Instead, we know that they decompose into the direct sum of Verma modules as $\Z$-graded $\D^{\leq0}$-modules \eqref{eq:P pol verma as Dleq}. Such decomposition is related to the graded composition factors of the Verma modules, as we see below in a graded version of the BGG Reciprocity.

\begin{cor}\label{cor:almost BGG}
Let $\lambda$ and $\mu$ be weights. Then
\begin{align*}
p_{\fP(\mu),\fM(\lambda)}=\overline{p_{\fM(\lambda),\fL(\mu)}}.
\end{align*}
\end{cor}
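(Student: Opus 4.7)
The plan is to compute $p_{\fP(\mu),\fM(\lambda)}$ as a graded Hom-space dimension, reduce it to $p_{\fW(\lambda^*),\fL(\overline{\mu}^*)}$ via duality and Frobenius reciprocity, and then identify this with $\overline{p_{\fM(\lambda),\fL(\mu)}}$ using the character identity (ii) of Theorem \ref{teo:chgr de fM y fW}.

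First I would use that $\fW(\nu)=\D\otimes_{\D^{\leq0}}\nu$, so Frobenius reciprocity gives a graded identification $\Hom_{\D^{\leq0}}^\bullet(\nu,\fN)_k\simeq\Hom_{\D}^\bullet(\fW(\nu),\fN)_k$ for every $\D$-module $\fN$; hence by \eqref{eq:definition of a fP fMlambda i},
\[
a_{\fP(\mu),\fM(\lambda),i}=\dim\Hom_{\D}^\bullet(\fW(\lambda_V\cdot\lambda),\fP(\mu))_{i-n_{top}}.
\]
Mimicking the duality step in the proof of Corollary \ref{cor:fMlambda fLmu}, I would dualize the Hom-space and insert $\fP(\mu)^*\simeq\fP(\overline{\mu}^*)[-l_\mu]$ from Lemma \ref{le:about P gr}\ref{item:P dual gr} together with the co-Verma counterpart $\fW(\lambda_V\cdot\lambda)^*\simeq\fW(\lambda^*)[-n_{top}]$ of \eqref{eq:dual of vermas}, which one deduces from (i) of Theorem \ref{teo:chgr de fM y fW} by the same Remark \ref{obs:morf inj}-type argument applied to the one-dimensional top component $\BV^{n_{top}}(\oV)$. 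Tracking the degree shifts produces
\[
a_{\fP(\mu),\fM(\lambda),i}=\dim\Hom_{\D}^\bullet(\fP(\overline{\mu}^*),\fW(\lambda^*))_{i-l_\mu}=a_{\fW(\lambda^*),\fL(\overline{\mu}^*),i-l_\mu},
\]
the last equality by \eqref{eq:definition of a fN fLlambda i}. Summing over $i$ then gives $p_{\fP(\mu),\fM(\lambda)}=t^{l_\mu}\,p_{\fW(\lambda^*),\fL(\overline{\mu}^*)}$.

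Second, I would compute the remaining factor from the character identity $\chgr\fW(\lambda^*)=\overline{\chgr\fM(\lambda)}$ supplied by (ii) of Theorem \ref{teo:chgr de fM y fW}. Expanding both sides in the basis $\{\chgr\fL(\nu)\mid\nu\in\Lambda\}$ of Theorem \ref{teo:chgr}, applying the identity $\chgr\fL(\nu)^*=t^{-l_\nu}\chgr\fL(\overline{\nu}^*)$ from \eqref{eq:dual gr Llambda}, and using that the map $\sigma(\nu):=\overline{\nu}^*$ is an involution on $\Lambda$ with $l_\nu=l_{\sigma(\nu)}$ and $\sigma(\overline{\mu}^*)=\mu$, a reindexing and comparison of coefficients of $\chgr\fL(\overline{\mu}^*)$ yields
\[
p_{\fW(\lambda^*),\fL(\overline{\mu}^*)}=t^{-l_\mu}\,\overline{p_{\fM(\lambda),\fL(\mu)}}.
\]
Substituting this into the previous identity cancels the $t^{\pm l_\mu}$ factors and delivers $p_{\fP(\mu),\fM(\lambda)}=\overline{p_{\fM(\lambda),\fL(\mu)}}$, as desired.

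The main obstacle is establishing the graded isomorphism $\fW(\lambda_V\cdot\lambda)^*\simeq\fW(\lambda^*)[-n_{top}]$ of $\D$-modules, the co-Verma counterpart of \eqref{eq:dual of vermas} which is not stated verbatim in the excerpt; it should follow by exhibiting a highest-weight generator of the correct weight in the top graded component of the dual and invoking finite-dimensionality, exactly as in the Verma case. Once this identification is in place, the remainder of the argument is routine bookkeeping with Laurent-polynomial exponents, and all other ingredients (Frobenius reciprocity for $\fW$, Hom-space duality, Theorems \ref{teo:chgr} and \ref{teo:chgr de fM y fW}, Lemma \ref{le:about P gr}, \eqref{eq:dual gr Llambda}) are already established in the excerpt.
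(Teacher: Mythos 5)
Your proposal is essentially correct and runs the same chain of identifications as the paper: Frobenius reciprocity for $\fW$, dualization of the Hom-space, insertion of $\fP(\mu)^*\simeq\fP(\overline{\mu}^*)[-l_\mu]$ from Lemma \ref{le:about P gr}\ref{item:P dual gr}, and then a reduction to the Verma-module character identities. The one place where you genuinely diverge is the treatment of $\fW(\lambda_V\cdot\lambda)^*$. You propose to identify it at the level of $\Z$-graded $\D$-modules as $\fW(\lambda^*)[-n_{top}]$ (a co-Verma analogue of \eqref{eq:dual of vermas}), which the paper never proves; the paper instead leaves $\fW(\lambda_V\cdot\lambda)^*$ untouched in the final Hom-space and observes that, since the coefficients $a_{\fN,\fL(\nu),i}$ depend only on $\chgr\fN$ by Theorem \ref{teo:chgr}, the graded character equalities of Theorem \ref{teo:chgr de fM y fW} already give $p_{\fW(\lambda_V\cdot\lambda)^*,\fL(\overline{\mu}^*)}=p_{\fM((\lambda_V\cdot\lambda)^*),\fL(\overline{\mu}^*)}$, after which Corollary \ref{cor:fMlambda fLmu} closes the argument. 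Your extra lemma does hold (the same Remark \ref{obs:morf inj}-style argument applied to $\ytop$ and the triangular decomposition works, using that $\BV^{n_{top}}(\oV)$ is one-dimensional), and you also implicitly re-derive the content of Corollary \ref{cor:fMlambda fLmu} in its $\fW$-form via \eqref{eq:dual gr Llambda}, so the bookkeeping comes out the same. What the paper's route buys is economy: it never needs to realize the dual of a co-Verma module concretely, only its graded character, which is exactly what Theorems \ref{teo:chgr} and \ref{teo:chgr de fM y fW} were set up to deliver; your route costs one more structural lemma but makes the module-level picture slightly more explicit.
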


\begin{proof}
We have the linear isomorphisms
\begin{align*}
\Hom^\bullet_{\D^{\leq0}}(\lambda_V\cdot\lambda,\fP(\mu))_{i-n_{top}}&\simeq\Hom^\bullet_{\D}(\fW(\lambda_V\cdot\lambda),\fP(\mu))_{i-n_{top}}\\
&\simeq\Hom^\bullet_{\D}(\fP(\mu)^*,\left(\fW(\lambda_V\cdot\lambda)[i-n_{top}]\right)^*)_0\\
&\simeq\Hom^\bullet_{\D}(\fP(\overline{\mu}^*)[-l_\mu],\fW(\lambda_V\cdot\lambda)^*[-i+n_{top}])_0\\
&\simeq\Hom^\bullet_{\D}(\fP(\overline{\mu}^*)[i-n_{top}-l_\mu],\fW(\lambda_V\cdot\lambda)^*)_{0},
\end{align*}
the first one is the Frobenius reciprocity and the third one is by Lemma \ref{le:about P gr} \ref{item:P dual gr}. Thus, by \eqref{eq:definition of a fP fMlambda i}, Theorem \ref{teo:chgr de fM y fW} and Corollary \ref{cor:fMlambda fLmu}, we have that 
\begin{align*}
t^{-n_{top}-l_\mu}\,p_{\fP(\mu),\fM(\lambda)}
=p_{\fW(\lambda_V\cdot\lambda)^*,\fL(\overline{\mu}^*)}
=p_{\fM((\lambda_V\cdot\lambda)^*),\fL(\overline{\mu}^*)}
=t^{-l_\mu-n_{top}}\,\overline{p_{\fM(\lambda),\fL(\mu)}}
\end{align*}
and the corollary follows.
\end{proof}

\section{Standard filtrations and the BGG Reciprocity}\label{sec:standard filtration}

% The demonstration of this section mimic those in the setting of the category $\cO$ attached to the enveloping algebra $U(\mathfrak{g})$, see for instance \cite{MR2428237}. There the order in the lattice of weights is crucial for some proofs but our weights has not order. In these cases, we use the finite-dimensional assumption on the Nichols algebra.

We say that a (graded) module $\fN$ has a {\it (graded) standard filtration}, so-called {\it Verma flag}, if there exists a sequence of (graded) subdmodules $0=\fN_0\subset\fN_1\subset\cdots\subset\fN_n=\fN$ such that each subquotient $\fN_i/\fN_{i-1}$ is isomorphic to a Verma module. The multiplicity of a Verma module $\fM(\lambda)$ in $\fN$ is
\begin{align*}
[\fN:\fM(\lambda)]=\#\left\{i\mid\fN_i/\fN_{i-1}\simeq\fM(\lambda)\right\}.
\end{align*}
We also define $\left[\fN:\fM(\lambda)[\ell]\right]=\#\left\{i\mid\fN_i/\fN_{i-1}\simeq\fM(\lambda)[\ell]\right\}$ for the graded case.

\

The following lemmas are analogous to \cite[Theorem 3.6 and Proposition 3.7 (b)]{MR2428237}.
\begin{lema}\label{le: verma ot N tiene standard filtration}
Let $\lambda$ be a weight and $\fN$ a (graded) module. Then $\fM(\lambda)\ot\fN$ has a (graded) standard filtration. 
\end{lema}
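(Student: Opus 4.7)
The plan is to combine the tensor identity of Lemma \ref{le:tensor identity} with the fact, recalled in Section \ref{subsec:Highest and Lowest weight}, that the simple $\D^{\geq0}$-modules are precisely the weights. Applying Lemma \ref{le:tensor identity} with $A=\D$, $B=\D^{\geq0}$, $R=\BV(V)$ (whose hypotheses the paper has already verified), I obtain a $\D$-module isomorphism
\begin{align*}
\fM(\lambda)\ot\fN \;=\; \bigl(\D\ot_{\D^{\geq0}}\lambda\bigr)\ot\fN \;\simeq\; \D\ot_{\D^{\geq0}}(\lambda\ot\fN).
\end{align*}
Because the triangular decomposition makes $\BV(V)$ a right $\D^{\geq0}$-basis of $\D$, the induction functor $\D\ot_{\D^{\geq0}}(-)$ is exact. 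Hence it suffices to produce a filtration of $\lambda\ot\fN$ by $\D^{\geq0}$-submodules whose successive quotients are weights, and then push it through induction.

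Next I would exploit that $\lambda\ot\fN$ is finite-dimensional as a $\D^{\geq0}$-module and therefore admits a composition series
\begin{align*}
0=U_0\subset U_1\subset\cdots\subset U_n=\lambda\ot\fN,\qquad U_i/U_{i-1}\simeq\mu_i\in\Lambda,
\end{align*}
whose subquotients are weights by the classification of simple $\D^{\geq0}$-modules. Applying $\D\ot_{\D^{\geq0}}(-)$ then yields a chain of $\D$-submodules of $\fM(\lambda)\ot\fN$ whose successive quotients are $\D\ot_{\D^{\geq0}}\mu_i=\fM(\mu_i)$, i.e.\ the desired standard filtration.

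For the graded version I would place $\lambda$ in degree $0$, so that $\lambda\ot\fN$ is a graded $\D^{\geq0}$-module whenever $\fN$ is graded, and the isomorphism above becomes a morphism of graded $\D$-modules (this is automatic since the $\Z$-grading on $\D$ from Section \ref{sec:Preliminaries} is a Hopf algebra grading). The simple objects in the category of graded $\D^{\geq0}$-modules are the shifts $\mu[j]$ with $\mu\in\Lambda$ and $j\in\Z$, so $\lambda\ot\fN$ admits a graded composition series with such subquotients. Pushing it through the graded-exact induction functor produces subquotients $\fM(\mu_i)[j_i]$ and hence a graded standard filtration.

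I do not foresee a genuine obstacle here: the tensor identity and the description of simple $\D^{\geq0}$-modules are both already in place in the paper. The only point requiring care is the grading bookkeeping, i.e.\ verifying that Lemma \ref{le:tensor identity} respects degrees when $\lambda$ is placed in degree $0$; this reduces to the homogeneity of the comultiplication of $\D$ and is immediate from the definition of the $\Z$-grading. Once recorded, both the ungraded and graded parts of the statement follow uniformly.
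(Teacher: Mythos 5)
Your argument is correct and coincides with the paper's proof: both apply Lemma \ref{le:tensor identity} with $U=\lambda$ to identify $\fM(\lambda)\ot\fN$ with $\D\ot_{\D^{\geq0}}(\lambda\ot\fN)$, take a (graded) Jordan--H\"older series of $\lambda\ot\fN$ as $\D^{\geq0}$-module whose subquotients are highest-weights, and push it through the exact induction functor $\D\ot_{\D^{\geq0}}(-)$. The only difference is that you spell out the grading bookkeeping which the paper leaves implicit.
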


\begin{proof}
Let $0=N_0\subset N_1\subset\cdots\subset N_r=\lambda\ot\fN$ be a filtration of $\lambda\ot\fN$ by (graded) $\D^{\geq0}$-modules such that $N_i/N_{i-1}$ is a highest-weight $\mu_i$, {\it i.~e.} a Jordan-H\"older series as (graded) $\D^{\geq0}$-module. Applying the exact functor $\D\ot_{\D^{\geq0}}(-)$ we obtain the desired filtration on $\fM(\lambda)\ot\fN$ using Lemma \ref{le:tensor identity} with $U=\lambda$.
\end{proof}

\begin{lema}\label{le:N Nprima y Nsegunda tienen filtration standard}
Let $\fN$ be a (graded) module which has a (graded) standard filtration and $\fN=\fN'\oplus\fN''$ as (graded) modules. Then $\fN'$ and $\fN''$ have (graded) standard filtrations.
\end{lema}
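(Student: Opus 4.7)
The plan is to deduce this from a cohomological characterisation of standard filtrations, mimicking the strategy of Irving that Humphreys uses in category $\mathcal{O}$.

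First I would establish the following criterion: a (graded) module $\fN$ has a (graded) standard filtration if and only if $\Ext^{1}_{\D}(\fN,\fW(\lambda))=0$ for every $\lambda\in\Lambda$ (in the graded setting, for every $\Z$-shift $\fW(\lambda)[i]$). The $(\Rightarrow)$ direction proceeds by induction on filtration length via the long exact $\Ext$ sequence, reducing everything to the base case $\Ext^{1}_{\D}(\fM(\mu),\fW(\lambda))=0$. This base case follows from Frobenius reciprocity
\begin{align*}
\Ext^{1}_{\D}(\fM(\mu),-)\simeq\Ext^{1}_{\D^{\geq 0}}(\mu,-),
\end{align*}
which is available because $\fM(\mu)=\D\otimes_{\D^{\geq 0}}\mu$ and $\D$ is free over $\D^{\geq 0}$, combined with the $\D^{\geq 0}$-injectivity of $\fW(\lambda)$: the excerpt records that co-Verma modules are indecomposable projective over $\D^{\geq 0}$, and since $\D^{\geq 0}$ is a finite-dimensional Hopf algebra and hence Frobenius, projective and injective modules coincide.

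Granting the criterion, the lemma is a formal consequence. Since $\fN\simeq\fN'\oplus\fN''$ and $\Ext^{1}$ is additive,
\begin{align*}
\Ext^{1}_{\D}(\fN,\fW(\lambda))\simeq\Ext^{1}_{\D}(\fN',\fW(\lambda))\oplus\Ext^{1}_{\D}(\fN'',\fW(\lambda)).
\end{align*}
The $(\Rightarrow)$ direction applied to $\fN$ forces the left-hand side to vanish for every $\lambda$, so both right-hand summands vanish, and the $(\Leftarrow)$ direction supplies (graded) standard filtrations on $\fN'$ and $\fN''$.

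The hard part will be the $(\Leftarrow)$ direction of the criterion. Humphreys' argument in category $\mathcal{O}$ uses the partial order on weights to pick a maximal $\mu$ appearing in $\fN$ and observe that every vector of weight $\mu$ is automatically a highest-weight vector, immediately producing a Verma submodule; we do not have such an order. The substitute is to locate a highest-weight vector $\fn\in\fN$ of some weight $\mu\in\Lambda$ satisfying $\xtop\cdot\fn\neq 0$, so that Remark \ref{obs:morf inj} promotes the induced morphism $\fM(\mu)\to\fN$ to an embedding; one then transmits the $\Ext$-vanishing to $\fN/\fM(\mu)$ through the long exact sequence and iterates on $\dim\fN$.
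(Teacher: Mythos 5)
Your strategy---replacing the paper's direct argument with an $\Ext^{1}$-vanishing criterion---is a genuinely different route from what the paper does, and it is a recognizable one (it is the cohomological side of the Irving/Humphreys toolkit that the paper says it imitates). The ``easy'' direction of your criterion is fine: Shapiro's lemma $\Ext^{1}_{\D}(\D\otimes_{\D^{\geq0}}\mu,-)\simeq\Ext^{1}_{\D^{\geq0}}(\mu,\Res(-))$ holds because $\D$ is free over $\D^{\geq0}$, and $\fW(\lambda)\simeq\D^{\geq0}\otimes_{\D(G)}\lambda$ is $\D^{\geq0}$-projective, hence $\D^{\geq0}$-injective since $\D^{\geq0}$ is a finite-dimensional Hopf algebra, so the base case $\Ext^{1}_{\D}(\fM(\mu),\fW(\lambda))=0$ is correct, and additivity of $\Ext^{1}$ over direct sums would indeed finish the lemma once the criterion is in hand.

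The problem is that, for this lemma, you need \emph{both} directions of the criterion, and you only sketch $(\Leftarrow)$; as sketched, it has two real gaps. First, you assert that from $\Ext^{1}_{\D}(\fN,\fW(\lambda))=0$ for all $\lambda$ one can ``locate a highest-weight vector $\fn$ with $\xtop\cdot\fn\neq0$,'' but nothing in the excerpt produces such a vector; in general the $\D^{\geq0}$-socle of $\fN$ can be entirely killed by $\xtop$ (e.g.\ for a non-Verma simple $\fL(\lambda)$), and the whole point is to show that $\Ext$-vanishing rules this out, which you do not do. Second, even granting an embedding $\fM(\mu)\hookrightarrow\fN$, the long exact sequence gives
\begin{align*}
\Hom_{\D}(\fN,\fW(\lambda))\longrightarrow\Hom_{\D}(\fM(\mu),\fW(\lambda))\longrightarrow\Ext^{1}_{\D}(\fN/\fM(\mu),\fW(\lambda))\longrightarrow 0,
\end{align*}
so ``transmitting'' the vanishing to $\fN/\fM(\mu)$ requires the restriction map $\Hom_{\D}(\fN,\fW(\lambda))\to\Hom_{\D}(\fM(\mu),\fW(\lambda))$ to be surjective, which is exactly where Humphreys uses the partial order and which you have not replaced.

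The paper avoids all of this by arguing directly. Take the Verma submodule $\iota:\fM(\lambda)\hookrightarrow\fN$ at the bottom of the filtration and the projections $p',p''$ to $\fN',\fN''$. Since $\fM(\lambda)$ has simple socle and $\ker(p'\circ\iota)\cap\ker(p''\circ\iota)=\ker\iota=0$, at least one of $p'\circ\iota$, $p''\circ\iota$ must be injective (this is precisely the content of Remark~\ref{obs:morf inj}: a map out of a Verma is injective iff it does not kill $\xtop\cdot\lambda$). Say $p'\circ\iota$ is injective; composing with the section $\iota'$ gives an embedding $\fM(\lambda)\hookrightarrow\fN'$, and one concludes by induction on the filtration length after passing to $\fN/\fM(\lambda)\simeq(\fN'/\fM(\lambda))\oplus\fN''$. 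This is shorter and uses only facts the paper has already established, whereas your route requires proving a stronger statement (the full $\Ext^{1}$ criterion) whose hard half is left open.
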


\begin{proof}
By the (graded) standard filtration of $\fN$ we have an inclusion of $\iota:\fM(\lambda)\longrightarrow\fN$. Let $p':\fN\longrightarrow\fN'$ be the natural projection and $\iota'$ its section. We can assume that  $(p'\circ\iota)(1\ot\lambda)\neq0$ since the Verma modules have simple socle. Therefore $(\iota'\circ p'\circ\iota):\fM(\lambda)\longrightarrow\fN'$ is an inclusion and the lemma follows by induction since $\fN/\fM(\lambda)=\fN'/\fM(\lambda)\oplus\fN''$.
\end{proof}

The main result of the section is the following, cf. \cite[Theorems 3.10 and 3.11]{MR2428237}. Recall the definition of $a_{\fP,\fM(\lambda),i}$ from \eqref{eq:definition of a fP fMlambda i}.

\begin{theorem}\label{teo:standard filtration BGG}
Every projective module $\fP$ has a graded standard filtration and 
\begin{align*}
\left[\fP:\fM(\lambda)[i]\right]=a_{\fP,\fM(\lambda),i} 
\end{align*}
holds for all $\lambda\in\Lambda$. Therefore the BGG Reciprocity
$$%\begin{align*}
[\fP(\mu):\fM(\lambda)]=[\fM(\lambda):\fL(\mu)]
$$%\end{align*}
holds for all $\lambda,\mu\in\Lambda$.
\end{theorem}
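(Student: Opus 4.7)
The proof will unfold in three stages: establish a graded standard filtration of every projective $\fP$; identify the filtration multiplicities with $a_{\fP,\fM(\lambda),i}$; and deduce the BGG Reciprocity.

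For the first stage, I would reduce to the induced modules. By \eqref{eq:Ind as sum of Ps}, each indecomposable projective $\fP(\mu)$ is a direct summand of $\fInd(\lambda)$ whenever $\lambda$ is a $\D(G)$-weight of $\fL(\mu)$, so by Lemma \ref{le:N Nprima y Nsegunda tienen filtration standard} it suffices to exhibit a graded standard filtration on each $\fInd(\lambda)$. Lemma \ref{le: W ot M} supplies the $\D$-module isomorphism $\fInd(\lambda) \simeq \fW(\varepsilon)\ot\fM(\lambda)$. Since $\D$ is a Drinfeld double and therefore quasitriangular, the $R$-matrix braiding is a $\D$-linear isomorphism $\fW(\varepsilon)\ot\fM(\lambda) \simeq \fM(\lambda)\ot\fW(\varepsilon)$; because $R$ is a sum of bidegree-$0$ tensors $a_i\ot a^i$ (the dual bases of $\BV(V)\#\ku G$ and $\BV(\oV)\#\ku^G$ have matched Nichols degree), this braiding respects the $\Z$-grading on $\D$. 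The right-hand side then carries a graded standard filtration by Lemma \ref{le: verma ot N tiene standard filtration} applied with $\fN=\fW(\varepsilon)$.

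For the second stage, every graded Verma module is graded projective as a $\D^{\leq 0}$-module (Lemma \ref{le:verma is proj cover}), so any graded standard filtration of $\fP$ splits as graded $\D^{\leq 0}$-modules. The Krull--Schmidt uniqueness for graded projectives \cite[Proposition 5.8]{MR659212} then forces the multiplicities of $\fM(\lambda)[i]$ in the filtration to equal those in the $\D^{\leq 0}$-decomposition \eqref{eq:P pol verma as Dleq}, which are $a_{\fP,\fM(\lambda),i}$ by \eqref{eq:definition of a fP fMlambda i}--\eqref{eq:def of p fP fMlambda i}. For the third stage, apply stages one and two to $\fP=\fP(\mu)$ and sum over $i$: $[\fP(\mu):\fM(\lambda)] = p_{\fP(\mu),\fM(\lambda)}(1)$. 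Corollary \ref{cor:almost BGG} gives $p_{\fP(\mu),\fM(\lambda)} = \overline{p_{\fM(\lambda),\fL(\mu)}}$; since the bar involution $t\mapsto t^{-1}$ fixes evaluations at $t=1$, this yields $p_{\fP(\mu),\fM(\lambda)}(1) = p_{\fM(\lambda),\fL(\mu)}(1)$, and Theorem \ref{teo:chgr} identifies the latter with $[\fM(\lambda):\fL(\mu)]$.

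The main obstacle is the tensor-order issue in the first stage: Lemma \ref{le: verma ot N tiene standard filtration} only builds a standard filtration when the Verma factor sits on the left, whereas Lemma \ref{le: W ot M} places it on the right. Invoking the quasitriangular structure of $\D$ is the cleanest bridge, provided one checks, as indicated above, that the universal $R$-matrix is homogeneous of total $\Z$-degree zero. A more elementary alternative is to mimic the construction in the proof of Lemma \ref{le: W ot M} and build a direct $\D$-isomorphism $\fInd(\lambda) \simeq \fM(\lambda)\ot\fW(\varepsilon)$ induced by $\lambda \mapsto (1\ot\lambda)\ot 1$, verifying injectivity on the socle $\BV^{n_{top}}(V)\ot\BV(\oV)\ot\lambda$ of $\fInd(\lambda)$ by unwinding the comultiplications \eqref{eq:comult in Dleq0}--\eqref{eq:comult in Dgeq0} in the spirit of Remark \ref{obs:morf inj}.
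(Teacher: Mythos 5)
Your proposal follows essentially the same route as the paper's proof: reduce to $\fInd(\lambda)$ via the direct-summand property \eqref{eq:Ind as sum of Ps} and Lemma \ref{le:N Nprima y Nsegunda tienen filtration standard}, obtain the standard filtration on $\fInd(\lambda)$ from Lemma \ref{le: W ot M} together with Lemma \ref{le: verma ot N tiene standard filtration}, read off the graded multiplicities from the $\D^{\leq0}$-decomposition \eqref{eq:P pol verma as Dleq}, and then evaluate Corollary \ref{cor:almost BGG} at $t=1$.

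Where you go beyond the paper is the tensor-order point. Lemma \ref{le: W ot M} gives $\fInd(\lambda)\simeq\fW(\varepsilon)\ot\fM(\lambda)$ with the co-Verma on the left, whereas Lemma \ref{le: verma ot N tiene standard filtration} is proved only for $\fM(\lambda)\ot\fN$; the paper's proof simply writes ``$\fInd(\lambda)\simeq\fM(\lambda)\ot\fW(\varepsilon)$'' and moves on, leaving the swap to the reader. You correctly flag this and supply the missing step: the $R$-matrix of the Drinfeld double is a sum of terms $e_i\ot e^i$ with $e_i\in\BV^k(V)\#\ku G$ of $\Z$-degree $-k$ and $e^i\in\BV^k(\oV)\#\ku^G$ of $\Z$-degree $k$, so the braiding $\fW(\varepsilon)\ot\fM(\lambda)\simeq\fM(\lambda)\ot\fW(\varepsilon)$ is an isomorphism of graded $\D$-modules. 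Your alternative elementary route (build a direct morphism $\fInd(\lambda)\to\fM(\lambda)\ot\fW(\varepsilon)$ and check injectivity on the socle in the style of Lemma \ref{le: W ot M}) would also work. The rest of your argument — the Krull--Schmidt identification of filtration multiplicities with $a_{\fP,\fM(\lambda),i}$ via \eqref{eq:P pol verma as Dleq}, and the evaluation of Corollary \ref{cor:almost BGG} at $t=1$ — matches the paper, just written out more fully than the paper's terse ``is clear.''
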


\begin{proof}
By Lemma \ref{le: W ot M} and Lemma \ref{le: verma ot N tiene standard filtration}, $\fInd(\lambda)\simeq\fM(\lambda)\ot\fW(\varepsilon)$ has a graded standard filtration. The indecomposable projective $\fP(\lambda)$ is a graded direct summand of $\fInd(\lambda)$ by \eqref{eq:Ind as sum of Ps} and the above section. Hence it has a graded standard filtration by Lemma \ref{le:N Nprima y Nsegunda tienen filtration standard}. Therefore all the projective modules have a standard filtration. 

The equality $\left[\fP:\fM(\lambda)[i]\right]=a_{\fP,\fM(\lambda),i}$ is clear and therefore the BGG Reciprocity follows from Corollary \ref{cor:almost BGG} by evaluating the polynomials at $t=t^{-1}=1$.
\end{proof}

We point out the information about the structure of the indecomposable projective modules which can be deduced from the above results.

\begin{rmk}
Let $0=\fN_0\subset\fN_1\subset\cdots\subset\fN_n=\fP(\lambda)$ be a graded standard filtration of $\fP(\lambda)$ whose subquotients are $\fN_i/\fN_{i-1}\simeq\fM(\lambda_i)[\ell_i]$ for all $i=1, ... n$. 

\smallskip

Hence, $\fP(\lambda)\simeq\fM(\lambda_1)[\ell_1]\oplus\cdots\oplus\fM(\lambda_n)[\ell_n]$ as $\Z$-graded $\D^{\leq0}$-modules.

\smallskip

Via this isomorphism, $\oV\cdot(1\ot\lambda_i)[\ell_i]\subseteq(1\ot\lambda_{i-1})[\ell_i+1]$. Moreover, if we know this action, we can infer inductively the action of $\oV$ on $\fM(\lambda_i)[\ell_i]$ using the commutation rules between $V$ and $\oV$, cf. \cite[p. 438]{PV2}. 

\smallskip

By Lemma \ref{le:about P gr}, $\fM(\lambda_1)[\ell_1]=\fM(\lambda_{\oV}\cdot\overline{\lambda})[l_\lambda+n_{top}]$ and $\fM(\lambda_n)[\ell_n]=\fM(\lambda)$.
\end{rmk}

\subsection{Simple and projective Verma modules}\label{sec:simple and proj verma}

The next corollary is a direct consequence of the BGG Reciprocity. However, we give another nice proof without using the former theorem.

\begin{lema}\label{le:projection over verma split}
Let $f:\fN\rightarrow\fM(\lambda)$ be a projection. If $S\subset\fN$ is a highest-weight such that $f(S)=1\ot\lambda$, then $f$ splits.
\end{lema}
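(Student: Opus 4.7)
The plan is to build a $\D$-module map $g:\fM(\lambda)\to\fN$ lifting the given identification $\lambda\simeq S$, and then show that $f\circ g$ is an automorphism of $\fM(\lambda)$, whence $g\circ(f\circ g)^{-1}$ is the desired section. The key observation is that a highest-weight $S\subset\fN$ is by definition a simple $\D^{\geq0}$-submodule on which $\BV(\oV)$ acts trivially, and since $f|_S:S\to 1\ot\lambda$ is a nonzero morphism between simple $\D^{\geq0}$-modules, it is an isomorphism. In particular $S\simeq\lambda$ as $\D^{\geq0}$-modules.

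First I would fix a $\D^{\geq0}$-module isomorphism $\psi:\lambda\xrightarrow{\sim}S$. Composing with the inclusion $S\hookrightarrow\fN$ yields a morphism of $\D^{\geq0}$-modules $\lambda\to\fN$, which by the universal property of the induced module $\fM(\lambda)=\D\ot_{\D^{\geq0}}\lambda$ (equivalently, Frobenius reciprocity) extends uniquely to a morphism of $\D$-modules
\begin{align*}
g:\fM(\lambda)\longrightarrow\fN,\qquad g(1\ot v)=\psi(v)\in S,\ \forall v\in\lambda.
\end{align*}
By construction $g(1\ot\lambda)=S$, and therefore $(f\circ g)(1\ot\lambda)=f(S)=1\ot\lambda$, with the restriction $f\circ g|_{1\ot\lambda}$ equal to the $\D^{\geq0}$-module isomorphism $f|_S\circ\psi:\lambda\to 1\ot\lambda$.

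Next I would apply Remark \ref{obs:morf inj} to the $\D$-module endomorphism $h=f\circ g:\fM(\lambda)\to\fM(\lambda)$. Since $h(1\ot\lambda)=1\ot\lambda$, the element $\xtop\cdot h(1\ot\lambda)$ coincides with $\xtop\cdot(1\ot\lambda)=\BV^{n_{top}}(V)\ot\lambda$, which is the (nonzero) socle of $\fM(\lambda)$ by \eqref{eq:soc of Verma}. Hence $h$ is injective, and by finite-dimensionality it is an isomorphism. Then $s=g\circ h^{-1}:\fM(\lambda)\to\fN$ satisfies $f\circ s=f\circ g\circ h^{-1}=h\circ h^{-1}=\id_{\fM(\lambda)}$, so $f$ splits.

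I expect no serious obstacle: the argument is essentially a one-line application of the universal property of the Verma module together with the injectivity criterion of Remark \ref{obs:morf inj}. The only point to handle with care is verifying that $f|_S$ really is an isomorphism (needed so that $h$ is nonzero on the generating weight), which follows from simplicity of $S$ and of $1\ot\lambda$ as $\D^{\geq0}$-modules.
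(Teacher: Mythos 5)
Your proof is correct and uses the same core idea as the paper — the universal property of the induced module $\fM(\lambda)=\D\ot_{\D^{\geq0}}\lambda$ — but takes a small detour. The paper's proof is one line: since $f|_S:S\to 1\ot\lambda$ is an isomorphism of simple $\D^{\geq0}$-modules, one defines $\phi:\fM(\lambda)\to\fN$ by extending the $\D^{\geq0}$-morphism $(f|_S)^{-1}:1\ot\lambda\to S$; then $f\circ\phi$ restricted to the generating weight $1\ot\lambda$ is the identity by construction, hence $f\circ\phi=\id_{\fM(\lambda)}$ with no further argument needed. You instead fix an arbitrary isomorphism $\psi:\lambda\to S$, get a morphism $g$ with $f\circ g$ not necessarily the identity, and then invoke Remark \ref{obs:morf inj} to show $h=f\circ g$ is an automorphism and correct by composing with $h^{-1}$. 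This is valid, but the appeal to Remark \ref{obs:morf inj} (and to the socle $\BV^{n_{top}}(V)\ot\lambda$ being nonzero) is avoidable: choosing $\psi=(f|_S)^{-1}$ from the outset collapses the last paragraph of your argument to a tautology, which is exactly what the paper does.
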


\begin{proof}
We define a morphism $\phi:\fM(\lambda)\rightarrow\fN$ by $\phi(1\ot\lambda)=S$ and hence $f\circ \phi=\id_{\fM}$. 
\end{proof}

\begin{cor}\label{teo:simple proy iny}
Let $\fM(\lambda)$ be a Verma module. The following are equivalent:
\begin{enumerate}[label=(\roman*)]
\item\label{teo:simple proy iny:simple} $\fM(\lambda)$ is simple.
\item\label{teo:simple proy iny:proy} $\fM(\lambda)$ is proyective.
\item\label{teo:simple proy iny:iny} $\fM(\lambda)$ is inyective.
\end{enumerate} 
\end{cor}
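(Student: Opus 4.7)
The plan is to establish the chain (i) $\Leftrightarrow$ (ii) $\Leftrightarrow$ (iii). The equivalence (ii) $\Leftrightarrow$ (iii) is immediate: since $\D$ is a finite-dimensional Hopf algebra, hence Frobenius (as recalled in Section~\ref{sec:Preliminaries}), the classes of projective and injective modules coincide. So the real work is the equivalence (i) $\Leftrightarrow$ (ii), which I will carry out using Lemma~\ref{le:projection over verma split} (together with its evident co-Verma analog).

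For (ii) $\Rightarrow$ (i), assume $\fM(\lambda)$ is projective. Then the projective cover map $\fP(\lambda) \twoheadrightarrow \fM(\lambda)$ from Lemma~\ref{le:about P}\ref{item:P pc of verma} splits, and indecomposability of $\fP(\lambda)$ forces $\fP(\lambda) = \fM(\lambda)$. Because $\D$ is symmetric, the socle of $\fP(\lambda)$ coincides with its head $\fL(\lambda)$, hence $\fS(\lambda) = \fL(\lambda)$, giving the numerical identity $\overline{\lambda} = \lambda_V\cdot\lambda$. Invoking BGG Reciprocity (Theorem~\ref{teo:standard filtration BGG}), the trivial standard filtration of $\fP(\lambda) = \fM(\lambda)$ yields $[\fM(\lambda):\fL(\lambda)] = [\fP(\lambda):\fM(\lambda)] = 1$. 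Since both the head and the socle of $\fM(\lambda)$ are copies of $\fL(\lambda)$ and this simple appears only once in the composition series, the head and socle must coincide as submodules, forcing $\fM(\lambda) = \fL(\lambda)$ to be simple.

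For (i) $\Rightarrow$ (ii), assume $\fM(\lambda) = \fL(\lambda)$ is simple. Then $\fS(\lambda) = \fL(\lambda)$, so $\overline{\lambda} = \lambda_V\cdot\lambda$, and a dimension count using $\dim\BV(V) = \dim\BV(\oV)$ and $\dim\overline{\lambda} = \dim\lambda$ shows $\fW(\overline{\lambda})$ has the same dimension as $\fM(\lambda)$; combined with the surjection $\fW(\overline{\lambda}) \twoheadrightarrow \fL(\lambda) = \fM(\lambda)$ coming from its simple head, this yields $\fW(\overline{\lambda}) = \fM(\lambda)$. Now consider the natural surjection $f:\fP(\lambda) \twoheadrightarrow \fW(\overline{\lambda})$ from Lemma~\ref{le:about P}\ref{item:P pc of coverma}, and let $p_0 \in \fP(\lambda)$ be the canonical weight-$\lambda$ generator. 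The element $\xtop\cdot p_0 \in \fP(\lambda)$ is a lowest weight of weight $\overline{\lambda}$, since $V\cdot\xtop = 0$ in $\BV(V)$ (top degree), and via the identification $\fW(\overline{\lambda}) = \fM(\lambda)$ its image $f(\xtop\cdot p_0) = \xtop\otimes\lambda$ corresponds to the generator $1\otimes\overline{\lambda}$ of $\fW(\overline{\lambda})$. The co-Verma version of Lemma~\ref{le:projection over verma split}---proved identically, using the Frobenius reciprocity $\Hom_\D(\fW(\mu),\fN) = \Hom_{\D^{\leq 0}}(\mu,\fN)$---then gives a splitting of $f$, and indecomposability of $\fP(\lambda)$ forces $\fP(\lambda) = \fW(\overline{\lambda}) = \fM(\lambda)$, so $\fM(\lambda)$ is projective.

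The main obstacle is not the splitting lemmas themselves, which are straightforward, but rather the two ``dimension/multiplicity'' inputs: verifying that $\fW(\overline{\lambda})$ and $\fM(\lambda)$ coincide when $\fM(\lambda)$ is simple (where one crucially uses $\overline{\lambda} = \lambda_V\cdot\lambda$ together with $\lambda_{\oV}\cdot\lambda_V = \varepsilon$), and, in the reverse direction, controlling the multiplicity $[\fM(\lambda):\fL(\lambda)]$; the latter appears to genuinely require either BGG Reciprocity or a careful Frobenius-reciprocity argument, and is the subtler half of the equivalence.
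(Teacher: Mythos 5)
Your proof is correct, but it follows a deliberately different route from the paper's. The paper explicitly states, just before this corollary, that it is a direct consequence of the BGG Reciprocity but nevertheless gives ``another nice proof without using the former theorem''; its argument is entirely self-contained, proving both implications via the splitting criterion of Lemma~\ref{le:projection over verma split} together with the fact (cited there as Corollary~15 of \cite{PV2}) that $\ytop$ and $\xtop$ act as mutually inverse isomorphisms between the highest and lowest weight spaces of a \emph{simple} Verma module. Concretely: for (i)$\Rightarrow$(ii) the paper shows \emph{every} projection $f:\fN\to\fM(\lambda)$ splits by lifting the socle weight $\BV^{n_{top}}(V)\ot\lambda$ to some $S'\subset\fN$ and setting $S=\ytop S'$; for (ii)$\Rightarrow$(i) it analyzes a section $\phi:\fM(\lambda)\hookrightarrow\fInd(\lambda)$ and deduces $\ytop x_s s\neq0$, which again invokes that citation. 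Your (i)$\Rightarrow$(ii) is close in spirit (a co-Verma version of the same splitting lemma, applied to the specific projection $\fP(\lambda)\twoheadrightarrow\fW(\overline\lambda)$ plus indecomposability and a dimension count), but your (ii)$\Rightarrow$(i) invokes Theorem~\ref{teo:standard filtration BGG}, which is precisely the dependency the paper wanted to avoid. Both routes are mathematically sound: yours is more structural and re-uses the heavy machinery already built, while the paper's is more elementary, keeps the corollary logically independent of BGG, and better illustrates the role of the integral elements $\xtop,\ytop$.

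One small point worth making explicit in your argument: when you write $f(\xtop\cdot p_0)=\xtop\otimes\lambda$, you are implicitly using that the two quotient maps $\fP(\lambda)\twoheadrightarrow\fM(\lambda)$ and $\fP(\lambda)\twoheadrightarrow\fW(\overline\lambda)$ have the same kernel (the radical), which holds only \emph{because} $\fM(\lambda)=\fW(\overline\lambda)=\fL(\lambda)$ is simple and is the head; this should be said, and it also guarantees $\xtop\cdot p_0\neq0$ (its image $\xtop\ot\lambda$ is nonzero in $\fM(\lambda)$). With that clarification the argument is complete.
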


\begin{proof}
Since $\D$ is a finite-dimensional Hopf algebra, a $\D$-module is proyective if and only if it is inyective. Then \ref{teo:simple proy iny:proy} and \ref{teo:simple proy iny:iny} are equivalent.

Assume that $\fM=\fM(\lambda)$ is simple. We shall prove that every projection $f:\fN\rightarrow\fM$ splits. By Lemma \ref{le:projection over verma split}, it is enough to find a highest-weight $S\subset\fN$ such that $f(S)=1\ot\lambda$. Let $S'$ be a weight of $\fN$ such that $f(S')=\BV^{n_{top}}(V)\ot\lambda$. Then $S=\ytop S'$ is a highest-weight of $\fN$ and $f(\ytop S')=1\ot\lambda$ by \cite[Corollary 15]{PV2}. Hence \ref{teo:simple proy iny:simple} implies \ref{teo:simple proy iny:proy}.

Assume now that $\fM=\fM(\lambda)$ is projective. Let $f:\fInd(\lambda)\rightarrow\fM$ be the projection such that $f(\lambda)=1\ot\lambda$ and $\phi:\fM\rightarrow\fInd(\lambda)$ a section of $f$. Hence $\phi(1\ot\lambda)\subset\ytop\ot\BV(V)\ot\lambda$ since $\oV S=0$. That is, for each $s\in S$ there exists $x_s\in\BV(V)$ such that $\phi(s)=\ytop\ot x_s\ot s$ and then $s=f\phi(s)=\ytop x_ss$. As the Verma module is graded, $x_s\in\BV^{n_{top}}(V)$ and is non-zero if $s\neq0$. Therefore $\fM$ is simple by \cite[Corollary 15]{PV2}. Then \ref{teo:simple proy iny:proy} implies \ref{teo:simple proy iny:simple}.
\end{proof}

We obtain a useful result if we combine the above corollary and the characterization of the simple modules $\fL(\lambda)$ and $\fS(\lambda)$.

\begin{cor}
Let $\fN$ be a module and $\fM(\lambda)$ a simple Verma module. Assume that either $\lambda$ is a highest-weight of $\fN$ or $\lambda_V\cdot\lambda$ is a lowest-weight of $\fN$. Then $\fM(\lambda)$ is a direct summand of $\fN$.
\end{cor}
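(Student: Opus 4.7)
The plan is to reduce both hypotheses to the statement that $\fM(\lambda)$ embeds as a submodule of $\fN$. Since $\fM(\lambda)$ is simple by assumption, Corollary \ref{teo:simple proy iny} makes it both projective and injective, so any such embedding will automatically split and exhibit $\fM(\lambda)$ as a direct summand of $\fN$.

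For the first hypothesis, if $\lambda$ is a highest-weight of $\fN$, I would first rephrase this as a nonzero $\D^{\geq 0}$-embedding $\lambda\hookrightarrow\fN$ (with $\oV$ acting trivially on the image), and then use Frobenius reciprocity for the induced module $\fM(\lambda)=\D\ot_{\D^{\geq 0}}\lambda$ to obtain a nonzero $\D$-morphism $\fM(\lambda)\to\fN$. Simplicity of $\fM(\lambda)$ forces this morphism to be injective, finishing this case.

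For the second hypothesis, the analogous step with the co-Verma module $\fW(\lambda_V\cdot\lambda)=\D\ot_{\D^{\leq 0}}(\lambda_V\cdot\lambda)$ only yields, via Frobenius reciprocity, a nonzero $\D$-morphism $g:\fW(\lambda_V\cdot\lambda)\to\fN$, and this is not directly an embedding of $\fM(\lambda)$. To finish I would invoke the description of co-Verma modules in Section \ref{notation:L y S}: the head of $\fW(\mu)$ is $\fS(\lambda_{\oV}\cdot\mu)$, and hence the head of $\fW(\lambda_V\cdot\lambda)$ is $\fS(\lambda)$, which under our simplicity hypothesis coincides with $\fL(\lambda)=\fM(\lambda)$. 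Consequently the nonzero image $g\bigl(\fW(\lambda_V\cdot\lambda)\bigr)\subseteq\fN$, as a nonzero quotient of a module with simple head $\fM(\lambda)$, must itself surject onto $\fM(\lambda)$. Projectivity of $\fM(\lambda)$ then splits this surjection and produces the desired embedding $\fM(\lambda)\hookrightarrow g\bigl(\fW(\lambda_V\cdot\lambda)\bigr)\subseteq\fN$.

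I expect the lowest-weight case to be the delicate one: Frobenius reciprocity alone does not deliver an embedding of $\fM(\lambda)$ into $\fN$, and the crucial maneuver is to realize $\fM(\lambda)$ as the simple head of $\fW(\lambda_V\cdot\lambda)$ under the simplicity hypothesis and then split off a copy inside $\fN$ using projectivity. Both halves of Corollary \ref{teo:simple proy iny} (that $\fM(\lambda)$ is projective and that it is injective) and both characterizations $\fL(\lambda)=\fM(\lambda)=\fS(\lambda)$ of a simple Verma module are used in an essential way.
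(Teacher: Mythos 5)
Your proof is correct and follows essentially the same route as the paper: for the highest-weight case, Frobenius reciprocity (or the universal property of $\fM(\lambda)$) gives a nonzero map $\fM(\lambda)\to\fN$ which is injective by simplicity and splits by injectivity; for the lowest-weight case, the submodule generated by $\lambda_V\cdot\lambda$ — which is exactly the image of your map $g:\fW(\lambda_V\cdot\lambda)\to\fN$ — has simple quotient $\fS(\lambda)=\fM(\lambda)$, and one splits first by projectivity and then by injectivity. Your write-up spells out the projectivity step in the second case more explicitly than the paper's terse ``Then $\fM(\lambda)$ is a direct summand of $\fU$ and hence it is of $\fN$ because $\fM(\lambda)$ is injective,'' but the underlying argument is identical.
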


\begin{proof}
If $\lambda$ is a highest-weight of $\fN$, then we have a non-trivial morphism $f:\fM(\lambda)\rightarrow\fN$. As $\fM(\lambda)$ is simple, $f$ is a monomorphism. But also $\fM(\lambda)$ is injective, then $f$ splits. 

Assume that $\lambda_V\cdot\lambda$ is a lowest-weight. Let $\fU$ be a simple quotient of the submodule generated by $\lambda_V\cdot\lambda$. As there is a unique simple lowest-weight module of weight $\lambda_V\cdot\lambda$, $\fU=\fS(\lambda)=\fM(\lambda)$. Then $\fM(\lambda)$ is a direct summand of $\fU$ and hence it is of $\fN$ because $\fM(\lambda)$ is injective.
\end{proof}

\subsection{Co-standard filtration}

We say that a (graded) module has a (graded) co-standard filtration if it is filtered by (graded) modules whose subquotients are isomorphic to co-Verma modules $\fW(\lambda)$, recall \eqref{def:coVerma}. The multiplicities $[\fN:\fW(\lambda)]$ and $[\fN:\fW(\lambda)[\ell]]$ are defined as for standard filtrations. We next formulate analogous results to those about standard filtrations. The proofs are similar.

\begin{lema}\label{le:para co-standard filt}
Let $\lambda$ be a weight and $\fN$ a (graded) module. 
\begin{enumerate}[label=(\roman*)]
\item Then $\fM(\lambda)\ot\fN$ has a (graded) standard filtration. 
\item If $\fN$ has a (graded) standard filtration and $\fN=\fN'\oplus\fN''$ as (graded) modules, then $\fN'$ and $\fN''$ have (graded) standard filtrations.
\end{enumerate}
\qed
\end{lema}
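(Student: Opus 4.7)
The approach is to mirror the proofs of Lemma~\ref{le: verma ot N tiene standard filtration} and Lemma~\ref{le:N Nprima y Nsegunda tienen filtration standard}, whose statements are formally identical to (i) and (ii) here; indeed, the section preamble advertises that the proofs are similar. I record the two arguments.

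For (i), the plan is to take a (graded) Jordan--H\"older series $0 = N_0 \subset N_1 \subset \cdots \subset N_r = \lambda \ot \fN$ of $\lambda \ot \fN$ as a (graded) $\D^{\geq 0}$-module, noting that the simple $\D^{\geq 0}$-modules are exactly the highest-weights (Section~\ref{subsec:Highest and Lowest weight}), so each subquotient $N_i / N_{i-1}$ is a highest-weight $\mu_i$. Since $\D$ is free over $\D^{\geq 0}$ by the triangular decomposition, the functor $\D \ot_{\D^{\geq 0}}(-)$ is exact and sends this series to a filtration of $\D \ot_{\D^{\geq 0}}(\lambda \ot \fN)$ with subquotients $\D \ot_{\D^{\geq 0}} \mu_i = \fM(\mu_i)$. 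The tensor identity (Lemma~\ref{le:tensor identity}), applied with $A = \D$, $B = \D^{\geq 0}$, $R = \BV(V)$ (the hypothesis \eqref{eq:Delta R} being \eqref{eq:comult in Dleq0}), and $U = \lambda$, yields $\D \ot_{\D^{\geq 0}}(\lambda \ot \fN) \simeq \fM(\lambda) \ot \fN$, giving the desired (graded) standard filtration of $\fM(\lambda) \ot \fN$.

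For (ii), the plan is an induction on the length $n$ of the standard filtration of $\fN$, with $n = 0$ trivial. Fix the bottom Verma module $\iota : \fM(\lambda) \hookrightarrow \fN$ of the filtration and let $p' : \fN \to \fN'$, $p'' : \fN \to \fN''$ be the two projections. Since $\soc \fM(\lambda) \simeq \lambda_V \cdot \lambda$ is simple (Section~\ref{notation:L y S} and \eqref{eq:soc of Verma}), the submodule $\iota(\soc \fM(\lambda))$ of $\fN' \oplus \fN''$ is simple, and at least one of $p', p''$ is nonzero on it; after relabeling, say $p'$ is. Then
\begin{align*}
\xtop \cdot (p' \circ \iota)(1 \ot \lambda) = p'\bigl(\iota(\xtop \cdot (1 \ot \lambda))\bigr) \neq 0,
\end{align*}
so $p' \circ \iota : \fM(\lambda) \hookrightarrow \fN'$ is injective by Remark~\ref{obs:morf inj}. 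Identifying $\fM(\lambda)$ with its image in $\fN'$, one has $\fN / \fM(\lambda) \simeq \fN' / \fM(\lambda) \oplus \fN''$ as (graded) modules, and $\fN / \fM(\lambda)$ inherits a standard filtration of length $n - 1$ from that of $\fN$. The inductive hypothesis then gives (graded) standard filtrations of $\fN' / \fM(\lambda)$ and $\fN''$; prepending $\fM(\lambda)$ to the filtration of $\fN' / \fM(\lambda)$ produces one for $\fN'$.

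The only mildly subtle point is the choice of summand in (ii), namely ensuring that the bottom Verma module of the filtration of $\fN$ injects into at least one of $\fN', \fN''$. This is handled uniformly by the simple-socle property of Verma modules together with Remark~\ref{obs:morf inj}, and no ingredients beyond those already developed are needed.
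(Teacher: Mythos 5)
Your proof correctly establishes the lemma as literally printed, which, as you observe, is a verbatim copy of Lemmas~\ref{le: verma ot N tiene standard filtration} and~\ref{le:N Nprima y Nsegunda tienen filtration standard}. But that duplication is a typo in the source. This lemma sits in the subsection on co-standard filtrations, its preamble says ``we next formulate analogous results to those about standard filtrations,'' and what is actually needed downstream (to deduce Theorem~\ref{teo:costandard filtration BGG}, and in the proof of Theorem~\ref{teo:tensor prod of proj}) is a graded \emph{co-standard} filtration of projective modules. The intended statement is the co-Verma analogue: (i) $\fW(\lambda)\ot\fN$ has a (graded) co-standard filtration, and (ii) direct summands of a module with a (graded) co-standard filtration inherit one. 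Re-proving the standard-filtration statements does not supply the ingredient Theorems~\ref{teo:costandard filtration BGG} and~\ref{teo:tensor prod of proj} rely on.

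The good news is that your argument dualizes word for word once the substitutions are made explicit rather than taken on faith. For (i): take a (graded) Jordan--H\"older series of $\lambda\ot\fN$ as a $\D^{\leq0}$-module, whose subquotients are now lowest-weights $\mu_i$; apply the exact functor $\D\ot_{\D^{\leq0}}(-)$ to get a filtration with subquotients $\fW(\mu_i)$; and invoke Lemma~\ref{le:tensor identity} with $A=\D$, $B=\D^{\leq0}$, $R=\BV(\oV)$, the hypothesis~\eqref{eq:Delta R} now being supplied by~\eqref{eq:comult in Dgeq0} in place of~\eqref{eq:comult in Dleq0}, to identify the induced module with $\fW(\lambda)\ot\fN$. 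For (ii): the simple-socle argument needs that the co-Verma module $\fW(\lambda)\simeq\BV(\oV)\ot\lambda$ has simple socle $\BV^{n_{top}}(\oV)\ot\lambda$ as a $\D^{\geq0}$-module (noted after~\eqref{def:coVerma}), and Remark~\ref{obs:morf inj} must be replaced by its co-Verma version: a morphism $f\colon\fW(\lambda)\rightarrow\fN$ is injective if and only if $\ytop\cdot f(1\ot\lambda)\neq0$, which follows from~\eqref{def:coVerma} exactly as the original remark follows from~\eqref{eq:verma as Dleq0}. With those replacements your induction on the length of the filtration goes through unchanged.
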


% \begin{proof}
% It follows as Lemmas \ref{le:N Nprima y Nsegunda tienen filtration standard} and \ref{le: verma ot N tiene standard filtration}.
% \end{proof}

Let $\fP$ be a graded projective module. Then, it is also graded projective as a $\D^{\geq0}$-module. For each $\lambda\in\Lambda$ and $i\in\Z$, we define
\begin{align}\label{eq:definition of a fP fWlambda i}
a_{\fP,\fW(\lambda),i}&=\dim\Hom_{\D^{\geq0}}(\lambda_{\oV}\cdot\lambda,\fP)_{i+n_{top}}\quad\mbox{and}\\
\label{eq:def of p fP fWlambda i}
p_{\fP,\fW(\lambda)}&=\sum_{i}a_{\fP,\fW(\lambda),i}\,t^i\in\Z[t,t^{-1}];
\end{align}
we assume that the highest-weight $\lambda_{\oV}\cdot\lambda$ is concentrated in degree $0$. Therefore
\begin{align}\label{eq:P pol verma as Dgeq}
\fP\simeq\oplus_{\lambda\in\Lambda}\,p_{\fP,\fW(\lambda)}\cdot\fW(\lambda).
\end{align}
as $\D^{\geq0}$-modules.

The second item below is the BGG Reciprocity for co-Verma modules. Item (iii) says that the multiplicities of the co-Verma modules can be deduced from the composition factors of the Verma modules.

\begin{theorem}\label{teo:costandard filtration BGG}
Every projective module $\fP$ has a graded co-standard filtration and the following indentities hold for all $\lambda,\mu\in\Lambda$:
\begin{enumerate}[label=(\roman*)]
\item $\left[\fP:\fW(\lambda)[i]\right]=a_{\fP,\fW(\lambda),i}$.
\smallskip
\item $p_{\fP(\mu),\fW(\lambda)}=\overline{p_{\fW(\lambda),\fL(\mu)}}$ and $[\fP(\mu):\fW(\lambda)]=[\fW(\lambda):\fL(\mu)]$.
\smallskip
\item $p_{\fP(\mu),\fW(\lambda)}=t^{-n_{top}}\overline{p_{\fM(\lambda_{\oV}\cdot\lambda),\fL(\mu)}}$ and $[\fP(\mu):\fW(\lambda)]=[\fM(\lambda_{\oV}\cdot\lambda):\fL(\mu)]$.
\end{enumerate}
\qed
\end{theorem}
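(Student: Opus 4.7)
The plan is to mirror the proofs of Theorem \ref{teo:standard filtration BGG} and Corollary \ref{cor:almost BGG}, substituting co-Verma modules for Verma modules throughout. First I would establish the existence of a graded co-standard filtration on every projective. Applying Lemma \ref{le: W ot M} with $\mu=\varepsilon$ gives $\fInd(\lambda)\simeq\fW(\lambda)\ot\fM(\varepsilon)$ as graded $\D$-modules, so Lemma \ref{le:para co-standard filt}(i) furnishes a graded co-standard filtration of $\fInd(\lambda)$. Since $\fP(\mu)$ is a graded direct summand of $\fInd(\lambda)$ whenever $\lambda$ is a weight of $\fL(\mu)$ (by \eqref{eq:Ind as sum of Ps}), Lemma \ref{le:para co-standard filt}(ii) transfers the filtration to $\fP(\mu)$, and hence to any projective $\fP$.

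For item (i), I would use that each $\fW(\lambda)$ is graded projective as a $\D^{\geq 0}$-module (noted just after \eqref{def:coVerma}). Hence a graded co-standard filtration of $\fP$ splits when restricted to $\D^{\geq 0}$, so the multiplicities $[\fP:\fW(\lambda)[i]]$ agree with the multiplicities of $\fW(\lambda)[i]$ in the graded $\D^{\geq 0}$-decomposition \eqref{eq:P pol verma as Dgeq}, which are precisely the numbers $a_{\fP,\fW(\lambda),i}$ from \eqref{eq:definition of a fP fWlambda i}.

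For (iii) I would imitate the chain of isomorphisms in the proof of Corollary \ref{cor:almost BGG}: combining Frobenius reciprocity for $\D\ot_{\D^{\geq 0}}(-)$, the duality functor, Lemma \ref{le:about P gr} \ref{item:P dual gr}, and the identification of graded duals of Verma modules deduced from Theorem \ref{teo:chgr de fM y fW} (which yields $\fM(\lambda_{\oV}\cdot\lambda)^*\simeq\fM(\lambda^*)[n_{top}]$ as graded $\D$-modules), one obtains a natural isomorphism
\begin{align*}
\Hom^\bullet_{\D^{\geq 0}}(\lambda_{\oV}\cdot\lambda,\fP(\mu))_{i+n_{top}}\simeq\Hom^\bullet_{\D}(\fP(\overline{\mu}^*)[i-l_\mu],\fM(\lambda^*))_{0}.
\end{align*}
Reading off coefficients and applying Corollary \ref{cor:fMlambda fLmu} gives $p_{\fP(\mu),\fW(\lambda)}=t^{-n_{top}}\overline{p_{\fM(\lambda_{\oV}\cdot\lambda),\fL(\mu)}}$, which is (iii). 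For (ii), Theorem \ref{teo:chgr de fM y fW} yields the character identity $\chgr\fW(\lambda)=t^{n_{top}}\chgr\fM(\lambda_{\oV}\cdot\lambda)$; expanding both sides in the $\Z[t,t^{-1}]$-basis $\{\chgr\fL(\mu)\}$ from Theorem \ref{teo:chgr} gives $p_{\fW(\lambda),\fL(\mu)}=t^{n_{top}}p_{\fM(\lambda_{\oV}\cdot\lambda),\fL(\mu)}$, and substituting this into (iii) produces $p_{\fP(\mu),\fW(\lambda)}=\overline{p_{\fW(\lambda),\fL(\mu)}}$. The ungraded BGG equalities in (ii) and (iii) then follow by evaluating the Laurent polynomials at $t=1$.

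The main obstacle is the bookkeeping of graded shifts in the duality computation for (iii): one must carefully track the $n_{top}$-shift coming from \eqref{eq:dual of vermas por grado}, the $-l_\mu$-shift from Lemma \ref{le:about P gr} \ref{item:P dual gr}, and the $i+n_{top}$ degree in the source Hom-space. All the needed tools (Frobenius reciprocity, symmetry of $\D$, Theorem \ref{teo:chgr de fM y fW}, Corollary \ref{cor:fMlambda fLmu}) are already available from Section \ref{sec:characters}, so the argument is essentially a careful accounting exercise with no new conceptual input beyond what was done in the Verma case.
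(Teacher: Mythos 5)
Your proof is correct and follows the route the paper intends, namely mirroring the arguments of Lemma~\ref{le: W ot M}, Lemma~\ref{le: verma ot N tiene standard filtration}--\ref{le:N Nprima y Nsegunda tienen filtration standard}, Corollary~\ref{cor:almost BGG} and Theorem~\ref{teo:standard filtration BGG}, with the roles of $\D^{\geq 0}$ and $\D^{\leq 0}$, $\fM$ and $\fW$, interchanged. The paper itself declines to write this out (it only records \emph{``the proofs are similar''}), so you have supplied the missing details. Your duality bookkeeping for~(iii) is right: $\Hom^\bullet_{\D^{\geq 0}}(\lambda_{\oV}\cdot\lambda,\fP(\mu))_{i+n_{top}}\simeq\Hom^\bullet_{\D}(\fM(\lambda_{\oV}\cdot\lambda),\fP(\mu))_{i+n_{top}}\simeq\Hom^\bullet_{\D}(\fP(\overline{\mu}^*)[i+n_{top}-l_\mu],\fM(\lambda_{\oV}\cdot\lambda)^*)_0$, and then one can finish either with the graded isomorphism $\fM(\lambda_{\oV}\cdot\lambda)^*\simeq\fM(\lambda^*)[n_{top}]$ as you do, or (as the paper does for Corollary~\ref{cor:almost BGG}) purely with graded characters via Theorem~\ref{teo:chgr de fM y fW}~(iii); both yield the factor $t^{-n_{top}}$. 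One small caution: Lemma~\ref{le:para co-standard filt} as printed is a verbatim repetition of the \emph{standard}-filtration lemmas (a typo); what is needed, and what you correctly use, is its co-standard analogue --- namely that $\fW(\lambda)\ot\fN$ has a graded co-standard filtration, obtained from Lemma~\ref{le:tensor identity} applied with $B=\D^{\leq 0}$ and $R=\BV(\oV)$ (whose coproduct hypothesis \eqref{eq:Delta R} holds by \eqref{eq:comult in Dgeq0}).
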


% \begin{proof}
% The existence of the graded co-standard filtration follows as in Theorem \ref{teo:standard filtration BGG}.
% 
% The equality (i) is clear. We deduce (ii) and (iii) from the following.
% 
% As in the proof of Corollary \ref{cor:almost BGG}, we have the next linear isomorphisms
% \begin{align*}
% \Hom^\bullet_{\D^{\geq0}}(\lambda_{\oV}\cdot\lambda,\fP(\mu))_{i+n_{top}}&\simeq\Hom^\bullet_{\D}(\fM(\lambda_{\oV}\cdot\lambda),\fP(\mu))_{i+n_{top}}\\
% &\simeq\Hom^\bullet_{\D}(\fP(\mu)^*,\left(\fM(\lambda_{\oV}\cdot\lambda)[i+n_{top}]\right)^*)_0\\
% &\simeq\Hom^\bullet_{\D}(\fP(\overline{\mu}^*)[-l_\mu],\fM(\lambda_{\oV}\cdot\lambda)^*[-i-n_{top}])_0\\
% &\simeq\Hom^\bullet_{\D}(\fP(\overline{\mu}^*)[i+n_{top}-l_\mu],\fM(\lambda_{\oV}\cdot\lambda)^*)_{0}.
% \end{align*}
% Then, we obatin the first equality of
% \begin{align*}
% t^{n_{top}-l_\mu}p_{\fP(\mu),\fW(\lambda)}
% &=p_{\fM(\lambda_{\oV}\cdot\lambda)^*,\fL(\overline{\mu}^*)}=t^{n_{top}}p_{\fM(\lambda^*),\fL(\overline{\mu}^*)}\\
% &=t^{n_{top}}\overline{t^{l_\mu+n_{top}}p_{\fM(\lambda_{\oV}\cdot\lambda),\fL(\mu)}}=t^{-l_\mu}\overline{t^{-n_{top}}p_{\fW(\lambda),\fL(\mu)}},
% \end{align*}
% the second and the last ones hold by Theorem \ref{teo:chgr de fM y fW} and the third by Corollary \ref{cor:fMlambda fLmu}. 
% \end{proof}

\subsection{Tensor product of projective modules}

\begin{theorem}\label{teo:tensor prod of proj}
Let $\fP$ and $\fQ$ be projective modules. Then
\begin{align*}
\fP\ot\fQ\simeq\oplus_{\lambda,\mu\in\Lambda}\,p_{\fP,\fW(\lambda)}\,p_{\fQ,\fM(\mu)}\,\fInd(\lambda\cdot\mu).
\end{align*}
\end{theorem}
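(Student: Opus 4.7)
The plan is to combine a graded co-standard filtration of $\fP$ with a graded standard filtration of $\fQ$ to produce a filtration of $\fP\ot\fQ$ (with the diagonal $\D$-action) whose successive quotients are induced modules, and then to split that filtration using the fact that $\fInd(\nu)$ is projective.

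First I would invoke Theorem \ref{teo:costandard filtration BGG} to fix a graded co-standard filtration
\[
0=\fP_0\subset\fP_1\subset\cdots\subset\fP_r=\fP
\]
of $\Z$-graded $\D$-submodules whose subquotients have the form $\fW(\lambda_i)[k_i]$, and in which each $\fW(\lambda)[k]$ occurs $a_{\fP,\fW(\lambda),k}$ times. By Theorem \ref{teo:standard filtration BGG} fix a graded standard filtration $0=\fQ_0\subset\cdots\subset\fQ_s=\fQ$ whose subquotients $\fM(\mu_j)[\ell_j]$ carry multiplicities $a_{\fQ,\fM(\mu),\ell}$. Because each $\fP_i$ is a graded $\D$-submodule of $\fP$, each $\fP_i\ot\fQ$ is a graded $\D$-submodule of $\fP\ot\fQ$, and one has
\[
(\fP_i\ot\fQ)/(\fP_{i-1}\ot\fQ)\;\simeq\;\fW(\lambda_i)[k_i]\ot\fQ.
\]
Refining each quotient by the chain $\fW(\lambda_i)[k_i]\ot\fQ_j$ yields a graded filtration of $\fP\ot\fQ$ whose subquotients are
\[
\fW(\lambda_i)[k_i]\ot\fM(\mu_j)[\ell_j]\;\simeq\;\fInd(\lambda_i\cdot\mu_j)[k_i+\ell_j],
\]
where the last isomorphism comes from Lemma \ref{le: W ot M} and the compatibility of the graded tensor product with the shift functors.

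Each induced module $\fInd(\nu)$ is projective because it is induced from the semisimple algebra $\D(G)$ (Section~\ref{sec:Preliminaries}), so every short exact sequence in the refined filtration splits. Consequently
\[
\fP\ot\fQ\;\simeq\;\bigoplus_{i,j}\fInd(\lambda_i\cdot\mu_j)[k_i+\ell_j]
\]
as $\Z$-graded $\D$-modules. Finally I would count: the coefficient of $\fInd(\lambda\cdot\mu)[j]$ in this decomposition equals
\[
\sum_{k+\ell=j}a_{\fP,\fW(\lambda),k}\,a_{\fQ,\fM(\mu),\ell},
\]
which is exactly the coefficient of $t^j$ in $p_{\fP,\fW(\lambda)}\,p_{\fQ,\fM(\mu)}$. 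This yields the claimed isomorphism.

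The only genuine obstacle is verifying the refinement step, namely that the double filtration $\fP_{i-1}\ot\fQ+\fP_i\ot\fQ_j$ is one of $\D$-submodules with the stated graded subquotients. This is routine from the diagonal action and the modular law, but it is the place where one must keep track of both gradings and of the $\D$-module structure simultaneously; everything else reduces to Lemma \ref{le: W ot M}, the projectivity of induced modules, and the bookkeeping of Laurent polynomials $p_{\fP,\fW(\lambda)}$ and $p_{\fQ,\fM(\mu)}$.
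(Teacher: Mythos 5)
Your proposal is correct and takes essentially the same route as the paper: both rely on the graded co-standard filtration of $\fP$, the graded standard filtration of $\fQ$, Lemma~\ref{le: W ot M} to identify the relevant subquotients with shifted induced modules $\fInd(\lambda\cdot\mu)$, and the projectivity (hence injectivity) of induced modules to split the filtration. The only difference is organizational: the paper splits in two successive passes (first along the filtration of $\fQ$, using that $\fP\ot\fN$ is projective for every $\fN$, then along the filtration of $\fP$ applied to each $\fP\ot\fM(\mu)$), while you build the refined double filtration and split it all at once.
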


\begin{proof}
Let $0=\fQ_0\subset\fQ_1\subset\cdots\subset\fQ_n=\fQ$ be a graded standard filtration of $\fQ$. Since $\fP\ot-$ is exact and $\fP\ot\fN$ is projective for any module, we have that
\begin{align*}
\fP\ot\fQ\simeq\oplus_{i=1}^n\,\fP\ot(\fQ_i/\fQ_{i-1})\simeq\oplus_{\mu\in\Lambda}\,p_{\fQ,\fM(\mu)}\,\fP\ot\fM(\mu).
\end{align*}
Let $0=\fP_0\subset\fP_1\subset\cdots\subset\fP_n=\fP$ be a graded co-standard filtration of $\fP$. By Lemma \ref{le: W ot M}, $\fW(\lambda)\ot\fM(\mu)\simeq\fInd(\lambda\cdot\mu)$ is projective. Then
\begin{align*}
\fP\ot\fM(\mu)\simeq\oplus_{i=1}^n\,(\fP_i/\fP_{i-1})\ot\fM(\mu)\simeq\oplus_{\lambda\in\Lambda}\,p_{\fP,\fW(\lambda)}\,\fInd(\lambda\cdot\mu)
\end{align*}
and the theorem follows.
\end{proof}

\section{Examples}\label{sec:examples}

\subsection{Taft algebras} Let $G=C_n=\langle g\rangle$ be the cyclic group of order $n$ and $q$ a $n$-th primitive root of unity. The quantum line $\ku\langle x \mid x^n=0\rangle$ is isomorphic to the Nichols algebra of $V=\ku x\in\ydg$ with action $g\cdot x=qx$ and coaction $\rho(x)=g\ot x$. The Taft algebra $T_q$ is the bosonization $\BV(V)\#\ku G$. The Frobenius-Lusztig kernel $\mathfrak{u}_q(\mathfrak{sl}(2))$ is isomorphic to a quotient of the Drinfeld double $\D(T_q)$ by a central group-like element. The simple modules of $\D(T_q)$ and $\mathfrak{u}_q(\mathfrak{sl}(2))$ were studied for instance in \cite{MR1743667} and \cite{MR1272539,MR1265478,MR2769244,MR1284788}, respectively. 

In the case of $\D(T_q)$, $\Lambda\simeq C_n\times C_n=\langle\chi_1\rangle\times\langle\chi_2\rangle$ and all the Verma modules $\fM(r,s)=\fM(\chi_1^r,\chi_2^s)$ have dimension $n$. The simple module $\fL(r,1-(r+l))$ has dimension $l$ for $1\leq l,r\leq n$ \cite[Theorem 2.5]{MR1743667}. Therefore the Verma modules $\fM(r,1-(r+n))$ are simple and projective by Corollary \ref{teo:simple proy iny}. 

It is easy to see that the composition factors of $\fM(r,1-(r+l))$ are $\fL(r,1-(r+l))$ and $\fL(r+l,1-\left((r+l)+(n-l)\right))$. Therefore the indecomposable projective $\fP(r,1-(r+l))$ has a submodule $\fN\simeq\fM(r+l-n,1-\left((r+l-n)+(n-l)\right))$ and $\fP(r,1-(r+l))/\fN\simeq\fM(r,1-(r+l))$ by the BGG Reciprocity. Notice that the module in \cite[Remark 2.8]{MR1743667} is $\fM(r,1-(r+n-1))$.

The previous facts do not appear in \cite{MR1743667}. Of course, we can also come to the same conclusion using the knowledge about modules over $\mathfrak{u}_q(\mathfrak{sl}(2))$.

\subsection{The Shapovalov determinant} Assume $\BV(V)$ is a Nichols algebra of diagonal type. In \cite{MR2840165} the authors give a formula  analogous to  the Shapovalov determinant for complex semisimple Lie algebras. Thus, they characterize the simple Verma modules of the Drinfeld doubles attached to $\BV(V)$. We now know that this also gives a characterization of the projective Verma modules by Corollary \ref{teo:simple proy iny}.

\subsection{The Nichols algebra of unidentified diagonal type \texorpdfstring{$\mathfrak{ufo}(7)$}{ufo7}} %\texorpdfstring{$\mathfrak{ufo}(7)$}{\mathfrak{ufo}(7)} 

This is the smallest Nichols algebra $\BV(V)$ of unidentified type, $\dim\BV(V)=144$, see \cite{MR3169545}. Let $G$ be an abelian group such that $\BV(V)\in\ydg$ and $\D$ the Drinfeld double of $\BV(V)\#\ku G$. The simple modules of $\D$ are classified in \cite{ufo7}. The authors divide the set of weights in $47$ subsets and study the corresponding Verma modules case by case. They obtain three families of weights ($\mathcal{C}_0$, $\mathcal{C}_1$, $\mathcal{C}_2$) which are related with the Shapovalov determinant.

They use the Shapovalov determinant to show that the Verma modules in $\mathcal{C}_0$ are simple \cite[Lemma 1.6]{ufo7}. Hence these are projective by Corollary \ref{teo:simple proy iny}.

The class $\mathcal{C}_1$ is formed by $9$ different types of weights. The composition factors of the Verma module $\fM(\lambda)$, $\lambda\in\mathcal{C}_1$, are given explicitly in \cite[Section 2]{ufo7}. These are $\fL(\lambda)$ and $\fL(\lambda')$ for certain $\lambda'\in\mathcal{C}_1$. Then, using the BGG Reciprocity, we deduce that the projective cover $\fP(\lambda)$ of $\fL(\lambda)$ has a submodule isomorphic to $\fM(\mu)$ for certain $\mu\in\mathcal{C}_1$ satisfying $\mu'=\lambda$ and $\fP(\lambda)/\fM(\mu)\simeq\fM(\lambda)$.

The Verma module in $\mathcal{C}_2$ might have more than two composition factors, see \cite[Remark 3.2]{ufo7}. It is possibly to obtain the composition factors of $\fM(\lambda)$, for all $\lambda\in\mathcal{C}_2$, by reasoning as in \cite[Remark 3.2]{ufo7}; the graded characters can also help. 

\subsection{The Fomin-Kirillov algebra \texorpdfstring{$\mathcal{FK}_3$}{FK3}}

This is a Nichols algebra in $\ydstres$, it is the smallest one over a non-abelian group. In \cite{PV2} we investigate the Verma and simple modules over the Drinfeld double of $\FK_3\#\ku\Sn_3$. In this case, the set of weights is 
$$
\Lambda=\left\{(e,+),\, (e,-),\, (e,\rho),\, (\sigma,+),\, (\sigma,-),\, (\tau,0),\, (\tau,1),\, (\tau,2)\right\}. 
$$

We have shown that $\fM(e,-)$, $\fM(\tau,1)$, $\fM(\tau,2)$ and $\fM(\sigma,+)$ are simple. Therefore they are projective by Corollary \ref{teo:simple proy iny}. The composition factors of the remaining Verma modules are given in \cite[Theorems 7, 8, 9 and 10]{PV2}. We have that
\begin{itemize}
 \item $\ch\fM(\sigma,-)=2\cdot\ch\fL(\sigma,-)+2\cdot\ch\fL(e,+)+\ch\fL(\tau,0)+\ch\fL(e,\rho)$.
 \smallskip
 \item $\ch\fM(e,+)=2\cdot\ch\fL(e,+)+\ch\fL(\sigma,-)$.
 \smallskip
 \item $\ch\fM(e,\rho)=\ch\fM(\tau,0)=\ch\fL(\tau,0)+\ch\fL(e,\rho)+\ch\fL(\sigma,-)$.
\end{itemize}
By the BGG Reciprocity, we conclude that
\begin{itemize}
 \item $\ch\fP(\sigma,-)=2\cdot\ch\fM(\sigma,-)+\ch\fM(e,+)+\ch\fM(\tau,0)+\ch\fM(e,\rho)$.
 \smallskip
 \item $\ch\fP(e,+)=2\cdot\ch\fM(e,+)+2\cdot\ch\fM(\sigma,-)$.
 \smallskip
 \item $\ch\fP(e,\rho)=\ch\fM(\tau,0)=\ch\fM(\tau,0)+\ch\fM(e,\rho)+\ch\fM(\sigma,-)$.
\end{itemize}

Together with Barbara Pogorelsky, we study these projective modules more in detail \cite{PV-in-preparation}. We also discuss the tensor product between the simple and projective modules.

\bibliography{refs}{}
\bibliographystyle{plain}
\end{document}